\documentclass[11pt, a4paper]{article}
\usepackage{times}
\usepackage{a4wide}
\usepackage[dvips, hyperindex]{hyperref}
\usepackage[british]{babel}
\usepackage{enumerate, longtable}
\usepackage{amsmath, amscd, amsfonts, amsthm, amssymb, latexsym, comment, stmaryrd, graphicx}
\usepackage[all]{xy}
\usepackage[T1]{fontenc}
\usepackage[latin1]{inputenc}
\usepackage{amsfonts}
\usepackage{amssymb}

\newtheorem{thm}{Theorem}[section]
\newtheorem{lem}[thm]{Lemma}
\newtheorem{defi}[thm]{Definition}
\newtheorem{rem}[thm]{Remark}

\newtheorem{prop}[thm]{Proposition}

\newcommand{\GL}{\mathrm{GL}}
\newcommand{\GSp}{\mathrm{GSp}}
\newcommand{\PGSp}{\mathrm{PGSp}}
\newcommand{\PSp}{\mathrm{PSp}}
\newcommand{\Sp}{\mathrm{Sp}}
\newcommand{\SL}{\mathrm{SL}}
\newcommand{\SO}{\mathrm{SO}}
\newcommand{\GO}{\mathrm{GO}}

\newcommand{\HT}{\mathrm{HT}}
\newcommand{\WD}{\mathrm{WD}}
\newcommand{\W}{\mathrm{W}}
\newcommand{\unit}{\mathrm{unit}}

\DeclareMathOperator{\Det}{det}

\newcommand{\Ind}{{\rm Ind}}

\newcommand{\calL}{\mathcal{L}}

\newcommand{\cO}{\mathcal{O}}

\newcommand{\CC}{\mathbb{C}}
\newcommand{\FF}{\mathbb{F}}

\newcommand{\tr}{\mathrm{tr}}

\newcommand{\rec}{\mathrm{rec}}

\newcommand{\Q}{\mathbb{Q}}
\newcommand{\R}{\mathbb{R}}
\newcommand{\A}{{\mathbb{A}}}
\newcommand{\Z}{{\mathbb{Z}}}
\newcommand{\C}{{\mathbb{C}}}
\newcommand{\F}{{\mathbb{F}}}

\newcommand{\pl}{\hat{\mu}^{\mathrm{pl}}}
\def\ra{\rightarrow}
\def\lg{\langle}
\def\rg{\rangle}
\def\hra{\hookrightarrow}
\newcommand{\cL}{{\mathcal{L}}}
\newcommand{\ur}{{\rm ur}}
\newcommand{\triv}{\mathbf{1}}
\newcommand{\cyclo}{\chi_{\mathrm{cyc}}} % cyclotomic character

\def\benu{\begin{enumerate}}
\def\eenu{\end{enumerate}}

\def\beq{\begin{equation}}
\def\eeq{\end{equation}}

\def\bit{\begin{itemize}}
\def\eit{\end{itemize}}

\begin{document}

\title{Compatible systems of symplectic Galois representations and the inverse
Galois problem III.\\
Automorphic construction of compatible systems with suitable local
properties.}
\author{
Sara Arias-de-Reyna\footnote{Universit\'{e} du Luxembourg,
Facult\'{e} des Sciences, de la Technologie et de la Communication,
6, rue Richard Coudenhove-Kalergi, L-1359 Luxembourg, Luxembourg,
sara.ariasdereyna@uni.lu}, Luis V. Dieulefait\footnote{Departament
d'Algebra i Geometria, Facultat de Matem\`{a}tiques, Universitat de
Barcelona, Gran Via de les Corts Catalanes, 585, 08007 Barcelona,
Spain, ldieulefait@ub.edu}, Sug Woo Shin\footnote{MIT, Department of Mathematics, 77 Massachusetts Avenue, Cambridge, MA 02139, USA / Korea Institute for Advanced Study, 85 Hoegiro,
Dongdaemun-gu, Seoul 130-722, Republic of Korea, swshin@mit.edu}, Gabor
Wiese\footnote{Universit\'{e} du Luxembourg, Facult\'{e} des
Sciences, de la Technologie et de la Communication, 6, rue Richard
Coudenhove-Kalergi, L-1359 Luxembourg, Luxembourg,
gabor.wiese@uni.lu}}
\maketitle

\begin{abstract}
This article is the third and last part of a series of three articles about
compatible systems of symplectic Galois representations and applications to the inverse
Galois problem.

This part proves the following new result for the inverse Galois
problem for symplectic groups. For any even positive integer~$n$ and any positive
integer~$d$, $\PSp_n(\FF_{\ell^d})$ or $\PGSp_n(\FF_{\ell^d})$ occurs
as a Galois group over the rational numbers for a positive density set of primes~$\ell$.

The result is obtained by showing the existence of a regular, algebraic,
self-dual, cuspidal automorphic representation of $\GL_n(\A_\Q)$
with local types chosen so as to obtain a compatible system of Galois representations
to which the results from Part~II of this series apply.

MSC (2010): 11F80 (Galois representations);
12F12 (Inverse Galois theory).

\end{abstract}

\section{Introduction}\label{sec:introduction}

This article is the last part of a series of three on compatible
systems of symplectic Galois representations and applications to the
inverse Galois problem (cf.\ \cite{partI}, \cite{partII}).
Our main theorem is the following new result for the inverse Galois
problem over $\Q$ for symplectic groups.

\begin{thm}\label{teo:InverseGalois}
For any even positive integer $n$ and for any positive integer $d$
there exists a set of rational primes of positive density such that, for
every prime $\ell$ in this set, the group $\PGSp_n(\F_{\ell^d})$ or
$\PSp_n(\F_{\ell^d})$ is realised as a Galois group over~$\Q$.
The corresponding number field ramifies at most at~$\ell$ and two more primes,
which are independent of~$\ell$.
\end{thm}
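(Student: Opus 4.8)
The plan is to construct a cuspidal, regular algebraic, self-dual automorphic representation $\pi$ of $\GL_n(\A_\Q)$ that is of symplectic type and ramified only at $\infty$ and two auxiliary primes $q_1\neq q_2$ chosen once and for all, independently of $\ell$, with the local components $\pi_{q_1}$, $\pi_{q_2}$ and $\pi_\infty$ prescribed so that the compatible system $\{\rho_{\pi,\lambda}\}_\lambda$ of $n$-dimensional $\lambda$-adic Galois representations attached to $\pi$ satisfies exactly the hypotheses of the main theorem of Part~II. Granting such a $\pi$, the theorem follows: Part~II then yields a set of primes $\ell$ of positive density for which the residual representation $\overline\rho_{\pi,\lambda}$ (with $\lambda\mid\ell$) has image containing $\Sp_n(\F_{\ell^d})$ and contained in $\GSp_n(\F_{\ell^d})$, so that its projective image is $\PSp_n(\F_{\ell^d})$ or $\PGSp_n(\F_{\ell^d})$; the fixed field of the kernel of the projectivised representation is then the desired Galois extension of $\Q$, unramified outside $\{\ell,q_1,q_2\}$ because $\rho_{\pi,\lambda}$ already is.

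First I would make the local conditions explicit, transcribing the hypothesis list of Part~II. At $\infty$: take $\pi_\infty$ cohomological of symplectic type with a fixed regular infinitesimal character, which gives regularity and fixes the Hodge--Tate weights of the system. At $q_1$: take $\pi_{q_1}$ to be an unramified twist of the Steinberg representation of $\GL_2(\Q_{q_1})$ times an unramified principal series of $\GL_{n-2}(\Q_{q_1})$, so that the monodromy operator of the Weil--Deligne representation of $\rho_{\pi,\lambda}|_{G_{\Q_{q_1}}}$ has rank one; for $\ell$ large this forces a transvection (an element $g$ with $\rk(g-1)=1$) into the image of $\overline\rho_{\pi,\lambda}$, which is the group-theoretic engine behind the largeness results of Parts~I--II. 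At $q_2$: take $\pi_{q_2}$ a suitable supercuspidal representation, induced from a generic character of the units of an unramified extension of $\Q_{q_2}$ of appropriate degree, engineered so that the characteristic polynomial of $\rho_{\pi,\lambda}(\Frob_{q_2})$ --- a fixed polynomial over the coefficient field, independent of $\lambda$ --- reduces modulo $\lambda$, for a positive-density set of $\ell$, to a polynomial that pins the residual field of definition down to exactly $\F_{\ell^d}$; this is where $d$ enters. Finally $\pi$ must be cuspidal and self-dual of symplectic (rather than orthogonal) type, $n$ being even being essential both for this and for the target group to make sense: I would obtain $\pi$ as the functorial transfer to $\GL_n$ of a cuspidal representation of a suitable quasi-split symplectic or special orthogonal group over $\Q$ realising the prescribed local data, using Arthur's classification to guarantee cuspidality and the correct self-dual type, the type being controlled by the local signs at $\infty$, $q_1$, $q_2$; alternatively $\pi$ can be produced by a simple-trace-formula or Poincar\'e-series argument with self-dual test data at the ramified places.

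Given $\pi$, I would invoke the construction of $\lambda$-adic Galois representations attached to regular algebraic self-dual cuspidal representations of $\GL_n$ over $\Q$ (Shin, Chenevier--Harris and their refinements): since $n$ is even and $\pi$ is of symplectic type, the system $\{\rho_{\pi,\lambda}\}$ takes values, after a suitable twist by a power of $\cyclo$, in $\GSp_n$; it is unramified outside $\{q_1,q_2,\ell\}$; it has the Hodge--Tate weights dictated by $\pi_\infty$; its restriction to $G_{\Q_{q_1}}$ has rank-one monodromy; and $\rho_{\pi,\lambda}(\Frob_{q_2})$ has the prescribed characteristic polynomial. These are precisely the data required as input by Part~II, so its main theorem applies and completes the proof, the ramification bound being immediate.

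I expect the decisive difficulty to be the automorphic existence step: producing a single cuspidal, regular algebraic, self-dual representation of $\GL_n(\A_\Q)$ of \emph{symplectic} type that simultaneously realises the chosen $\pi_{q_1}$, $\pi_{q_2}$ and $\pi_\infty$. The orthogonal/symplectic dichotomy is the delicate point: one must track the self-duality type through the construction --- descent to a classical group together with Arthur's multiplicity formula, or a trace-formula argument with symmetric test functions --- matching the local signs so that the global type is symplectic and not orthogonal, while preserving cuspidality and the two ramification constraints. A secondary technical point is to choose $\pi_{q_2}$ so that the residual Frobenius characteristic polynomial retains the required shape after reduction modulo $\lambda$ for a positive-density set of $\ell$, a Chebotarev-type argument to be made compatible with all the other conditions. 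Once $\pi$ is in hand, attaching the Galois representations and invoking Part~II are essentially matters of citation.
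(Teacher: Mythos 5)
Your overall architecture matches the paper's: use Arthur's classification (via a descent from $\SO_{n+1}$) to produce a cuspidal, regular algebraic, self-dual automorphic representation $\pi$ of $\GL_n(\A_\Q)$ of symplectic type, ramified only at two fixed auxiliary primes, with a Steinberg-type component at one prime supplying rank-one monodromy and a maximally induced supercuspidal component at the other; attach the compatible system of $\lambda$-adic Galois representations; and feed it into the main theorem of Part~II. The existence step (Sections 3--4 of the paper), including the symplectic/orthogonal dichotomy you flag, is handled essentially as you outline.

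There is, however, a genuine gap at the decisive point. You assert that the rank-one monodromy of $\rho_{\pi,\lambda}|_{G_{\Q_{q_1}}}$ ``forces a transvection into the image of $\overline\rho_{\pi,\lambda}$'' for $\ell$ large. This does not follow: although $\rho_{\pi,\lambda}(I_{q_1})$ is a nontrivial pro-$\ell$ group generated by a transvection, its image in the \emph{residual} representation can perfectly well be trivial, i.e.\ $\overline\rho_{\pi,\lambda}$ can become unramified at $q_1$ (this is exactly the level-lowering phenomenon). Ruling this out for a density-one set of $\ell$ is the content of Section~5 of the paper and is the technical heart of the whole argument: one base-changes to an imaginary quadratic field $F$ in which $q_1$, $q_2$ and $\ell$ split, applies the BLGGT automorphy/level-lowering theorem (Theorem~4.4.1 of \cite{BLGGT}) to obtain a cuspidal $\pi'_F$ congruent to $\pi_F$ mod $\lambda$ but unramified at the places above $q_1$, with the same infinitesimal character and inertial types at the other ramified places, uses Harish--Chandra finiteness to bound the set of such $\pi'_F$ independently of $\ell$, and then derives a contradiction (via Chebotarev and Brauer--Nesbitt plus strict compatibility at $q_1$) if residual unramifiedness happened for infinitely many split $\ell$; varying over infinitely many such $F$ then forces the exceptional set of $\ell$ to have density zero. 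This in turn requires the absolute irreducibility of $\overline\rho_\lambda(\pi)|_{G_{F(\zeta_\ell)}}$ uniformly in $F$ for $\ell$ large (the paper's Lemma~5.1, proved via the results of Barnet-Lamb et al.\ and the non-inducedness argument from Part~II), another ingredient your sketch omits. Without this step, the hypothesis of Part~II that $\overline\rho_{\pi,\lambda}$ contains a nontrivial transvection is simply unverified. A lesser imprecision: the mechanism by which $d$ enters is not the characteristic polynomial of a Frobenius at $q_2$ (that prime is ramified, so this is not the well-defined invariant), but the choice $p\equiv 1\pmod{dn}$ governing the order of the tame character inducing $\rho_{q_2}$, with the field $\F_{\ell^d}$ then extracted in Part~II from the image of inertia at $q_2$ and the projective field of definition of the system.
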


In fact, one of the two auxiliary primes can be taken to be any prime,
the other one can be chosen from a set of primes of positive density defined by
an explicit Chebotarev condition.
The set of primes~$\ell$ depends on the two previous choices and the choice
of an automorphic form, and is also given by a Chebotarev condition
(in the projective field of definition (see~\cite{partI}) of the compatible system
of Galois representations attached to the automorphic form (see below))
except for a density-zero set.

Theorem~\ref{teo:InverseGalois} is complementary to the main result of Khare, Larsen
and Savin~\cite{KLS1}, in the sense that it is in the horizontal
direction in the terminology of~\cite{DiWi}, whereas loc.~cit.\
is in the vertical one, that is, $\ell$ is fixed and $d$ runs.
The horizontal direction needs quite a different approach from the
vertical one. Nevertheless, some ideas of~\cite{KLS1}, for instance
that of $(n,p)$-groups, are crucially used also in our approach.
The overall strategy is described in the introduction to Part~I.

The goal in this Part~III is to construct compatible systems of Galois
representations satisfying the conditions in the main theorem on the inverse Galois problem
%Theorem 1.5 and Corollary 1.6
of~\cite{partII}.
In order to do so, we prove the existence of a regular, algebraic,
self-dual, cuspidal automorphic representation of $\GL_n(\A_\Q)$
with the required local types by adapting the results from~\cite{S}. This
automorphic representation is such that the compatible system of
Galois representations attached to it is symplectic, generically
irreducible, has a maximally induced place~$q$ of a certain prime order~$p$,
and locally at a prime $t$ contains a transvection. In order to show that the
transvection is preserved in the image of the residual
representation, at least for a density one set of primes, we will
apply a level-lowering result from~\cite{BLGGT} over suitable quadratic
imaginary fields.

The structure of this paper is the following. In Section~\ref{sec:RAESDC}
we recall general facts about regular self-dual
automorphic representations and their corresponding compatible
systems of Galois representations (everything in this section can be
found in~\cite{BLGGT}). In Section~\ref{sec:existence} we show the existence of the sought for
automorphic representation. In Section~\ref{sec:compatible-systems} we specify the conditions
on the ramified primes that we will need and explain the properties
of the compatible system attached to the automorphic representation from
Section~\ref{sec:existence}. In Section~\ref{sec:transvections} we perform the level-lowering argument.
Finally, in Section~\ref{sec:conclusion} we derive the main conclusions that follow
from the combination of the results in our three papers.

\subsection*{Acknowledgements}

S.~A.-d.-R. was partially supported by the project MTM2012-33830 of the Ministerio de Econom\'ia y Competitividad of Spain. She thanks the University of Barcelona for its hospitality during several short visits. S.~A.-d.-R. would like to thank X.~Caruso for his explanations concerning tame inertia weights and the Hodge-Tate weights. L. V. D. was supported by the project MTM2012-33830 of the Ministerio de Econom\'ia y Competitividad of Spain and by an ICREA Academia Research Prize.
S.~W.~S. was partially supported by NSF grant DMS-1162250 and Sloan Fellowship.
G.~W.\ was partially supported by the DFG priority program~1489 and by the Universit\'e du Luxembourg.

\section{RAESDC automorphic representations and the Galois
representations attached to them}\label{sec:RAESDC}

The reader is referred to \cite{BLGGT} for more details concerning
anything in this section except the $v=l$ case of \eqref{item:4} below, for which we refer to \cite{Caraiani}. For a field $k$ we adopt the notation $G_k$ to denote the absolute Galois group of $k$.

Let $\Z^{n, +}$ be the set of $n$-tuples $a = (a_i) \in \Z^n$ such
that $a_1 \geq a_2 \geq .... \geq a_n$.
Let $a  \in  \Z^{n, +} $, and let $\Xi_a$ be the irreducible
algebraic representation of $\GL_n$ with highest weight $a$.
A \emph{RAESDC (regular, algebraic, essentially self-dual, cuspidal)
automorphic representation of $\GL_n(\A_\Q)$} is a pair $(\pi, \mu)$
consisting of a cuspidal automorphic representation $\pi$ of $\GL_n(\A_\Q)$ and a continuous character $\mu: \A_\Q^{\times}/ \Q^{\times} \rightarrow \C^{\times}$ such that:

\begin{enumerate}[(1)]

\item (regular algebraic) $\pi_\infty$ has the same infinitesimal character as
$\Xi_a^{\vee}$ for $a \in \Z^{n, +}$. We say that $\pi$ has weight $a$.

\item  (essentially self-dual) $\pi \cong \pi^{\vee} \otimes (\mu \circ \Det)$.

\end{enumerate}

Such a pair $(\pi,\mu)$ is an instance of a polarized representation in the sense of \cite[2.1]{BLGGT}.
In this situation, there exists an integer $w$ such
that, for every $ 1 \leq i \leq n$, $a_i + a_{n+1-i} = w$.
Let $S$ be the (finite) set of primes $p$ such that $\pi_p$ is ramified.
There exist a number field $M\subset \C$, which is finite over the field of rationality of $\pi$ in the sense of \cite{Clo90}, and a strictly
compatible system of semisimple Galois representations (see \cite[5.1]{BLGGT} for this notion; in particular the characteristic polynomial of a Frobenius element at almost every finite place has coefficients in $M$)
\begin{equation*}\begin{aligned} \rho_\lambda (\pi): G_\Q &\rightarrow \GL_n (\overline{M}_\lambda),\\
\rho_\lambda(\mu): G_\Q &\rightarrow \overline{M}_\lambda^{\times}, \end{aligned}\end{equation*}
where $\lambda$ ranges over all finite places of~$M$
(together with fixed embeddings $M \hookrightarrow M_\lambda \hookrightarrow \overline{M}_\lambda$,
where $\overline{M}_\lambda$ is an algebraic closure of~$M_\lambda$)
such that the following properties are satisfied.

\begin{enumerate}[(1)]

\item  $\rho_\lambda(\pi) \cong \rho_\lambda(\pi)^{\vee} \otimes
\cyclo^{1-n} \rho_\lambda(\mu)$, where $\cyclo$ denotes the
$\ell$-adic cyclotomic character.

\item The representations $\rho_\lambda(\pi)$ and
$\rho_\lambda(\mu)$ are unramified outside $S \cup \{ \ell  \}$, where $\ell$ denotes the rational prime below $\lambda$.

\item  Locally at $\ell$ (i.e., when restricted to a decomposition group
at $\ell$), the representations $\rho_\lambda(\pi)$ and
$\rho_\lambda(\mu)$ are de Rham, and if $\ell \notin S$, they are
crystalline.

\item The set $\HT(\rho_\lambda(\pi))$ of Hodge-Tate weights of $\rho_\lambda(\pi)$
is equal to:
\begin{equation*} \{  a_1 + (n-1), a_2 + (n-2), \ldots, a_n  \}.\end{equation*}
(The Hodge-Tate weight of $\cyclo$ is $-1$.) In particular, they are $n$ different numbers and they are independent of $\lambda$ and $\ell$.
Therefore, the representations are regular.

\item\label{item:4}  The system is strictly compatible, as implied by the following
compatibility with Local Langlands: Fix any isomorphism $\iota:\overline{M}_\lambda\simeq \C$ compatible with the inclusion $M\subset \C$ w.r.t.\ the already fixed embedding $M \hookrightarrow M_\lambda \hookrightarrow \overline{M}_\lambda$.
Whether $v \nmid \ell$ or $v|\ell$, we have:
\begin{equation}\label{eq:star} \iota\WD(\rho_{\lambda}(\pi)|_{G_{\Q_v}})^{\mathrm{F-ss}} \cong \rec (\pi_v \otimes | \Det  |_v^{(1-n)/2}).\end{equation}
Here $\WD$ denotes the Weil-Deligne representation attached to a
representation of $G_{\Q_v}$,
$\mathrm{F-ss}$ means the Frobenius semisimplification, and $\rec$ is the notation for the (unitarily normalised) Local Langlands Correspondence, which attaches to an irreducible admissible representation of $\GL_n(\Q_v)$ a $\WD$ representation of the Weil group $\W_{\Q_v}$.

\end{enumerate}

\begin{rem}
We did not include $\iota$ in the notation since the isomorphism class of $\rho_{\lambda}(\pi)$ is independent of the choice of $\iota$. This is easy to deduce from the fact that the Frobenius traces at all but finitely many places are in $M$ via the Chebotarev density theorem.
\end{rem}

\begin{rem} For every prime $p$ of good reduction for~$\pi$, and for each $\lambda\nmid p$ prime of $M$, the trace of the image under $\rho_{\lambda}(\pi)$ of the Frobenius at $p$ belongs to the field of rationality of $\pi$ (hence to $M$) since the map $\rec$ twisted by $|\det|^{(1-n)/2}$ commutes with all field automorphisms of $\C$. Therefore, if the residual representation $\overline{\rho}_{\lambda}(\pi)$ is absolutely irreducible, it
follows from Th\'eor\`eme~2 of~\cite{Ca} that the representation $\rho_\lambda (\pi)$ can be
defined over $M_\lambda$.
\end{rem}

\begin{rem} Observe that the above property (\ref{item:4}) implies that as long as $v$ and
$\ell$ are different, the behaviour locally at $v$ of the
representations $\rho_\lambda(\pi)$ is independent of $\ell$, and it
can be determined (up to Frobenius semisimplification) from the admissible representation $\pi_v$, via Local Langlands.\end{rem}

Fix a symmetric form on $\overline{M}_\lambda^n$, and a symplectic form if $n$ is even. Thus we have the subgroup $\GO_{n}(\overline{M}_\lambda)$, and also $\GSp_{n}(\overline{M}_\lambda)$ if $n$ is even, of $\GL_{n}(\overline{M}_\lambda)$.
It is important for us to know a criterion for the image of $\rho_\lambda(\pi)$ to be contained in $\GSp_{n}(\overline{M}_\lambda)$ (for any fixed choice of symplectic form on $\overline{M}_\lambda^n$) up to conjugation. This is deduced from a result of Bella\"iche and Chenevier. Noting that $\mu$ must be an algebraic Hecke character, let $r\in \Z$ denote the unique integer such that $\mu|\cdot|^{-r}$ is a finite character. The integer ring in $\overline{M}_\lambda$ will be denoted $\cO_{\overline{M}_{\lambda}}$.

\begin{lem}\label{l:image-GSp}
Suppose that $\rho_\lambda(\pi)$ is residually irreducible. If $n$ is even and $\mu_\infty(-1)=(-1)^r$ (resp. if $n$ is odd or $\mu_\infty(-1)\neq(-1)^r$) then the image of $\rho_\lambda(\pi)$ is contained in $\GSp_{n}(\cO_{\overline{M}_{\lambda}})$ (resp. $\GO_{n}(\cO_{\overline{M}_{\lambda}})$) possibly after a conjugation by an element of $\GL_n(\overline{M}_\lambda)$.
\end{lem}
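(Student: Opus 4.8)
The plan is to produce from property~(1) a Galois-invariant bilinear form, to use the residual irreducibility to show it is unique up to scalar (hence symmetric or alternating) and can be conjugated into $\GL_n(\cO_{\overline{M}_\lambda})$, and finally to invoke the aforementioned result of Bella\"iche and Chenevier to decide which of the two possibilities occurs. Put $\rho:=\rho_\lambda(\pi)$, $V:=\overline{M}_\lambda^{\,n}$ and $\chi:=\cyclo^{1-n}\rho_\lambda(\mu)$. Residual irreducibility forces $\rho$ to be absolutely irreducible. Property~(1) is exactly the statement that there is a nondegenerate bilinear pairing $\Phi\colon V\times V\to\overline{M}_\lambda$ with $\Phi(\rho(g)x,\rho(g)y)=\chi(g)\,\Phi(x,y)$ for all $g\in G_\Q$, and by Schur's lemma the space of such pairings is one–dimensional over $\overline{M}_\lambda$. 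Applying this to $\Phi$ and to the pairing $(x,y)\mapsto\Phi(y,x)$ — which is again of the same kind — we get $\Phi(y,x)=c\,\Phi(x,y)$ with $c^2=1$; thus $\Phi$ is symmetric or alternating, and $\rho$ preserves, up to the similitude character $\chi$, an orthogonal (resp.\ symplectic) form on $V$.

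Next I would make the form integral. By the remark above, $\rho$ may be taken valued in $\GL_n(M_\lambda)$; choose a $G_\Q$–stable $\cO_{M_\lambda}$–lattice $L_0\subset V$ (unique up to homothety since $\overline{\rho}$ is irreducible) and set $L:=L_0\otimes_{\cO_{M_\lambda}}\cO_{\overline{M}_\lambda}$, a $G_\Q$–stable $\cO_{\overline{M}_\lambda}$–lattice. After rescaling $\Phi$ by a scalar we may assume $\Phi(L,L)\subseteq\cO_{\overline{M}_\lambda}$ and $\Phi(L,L)\not\subseteq\fm_{\overline{M}_\lambda}$; the reduction $\overline{\Phi}$ on $L/\fm_{\overline{M}_\lambda}L$ is then a nonzero pairing, equivariant for $\overline{\rho}$ up to the scalar character $\overline{\chi}$, so its radical is $\overline{\rho}$–stable, hence $0$. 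Thus $\overline{\Phi}$ is perfect, and therefore so is $\Phi$ on $L$. A perfect alternating form over $\cO_{\overline{M}_\lambda}$ admits a symplectic basis, and — for $\ell$ odd — a perfect symmetric form admits an orthogonal, hence (every unit of $\cO_{\overline{M}_\lambda}$ being a square) an orthonormal, basis; the corresponding change of basis is an element of $\GL_n(\overline{M}_\lambda)$ conjugating the image of $\rho$ into the similitude group over $\cO_{\overline{M}_\lambda}$ of the fixed symplectic (resp.\ symmetric) form, i.e.\ into $\GSp_n(\cO_{\overline{M}_\lambda})$ (resp.\ $\GO_n(\cO_{\overline{M}_\lambda})$). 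The orthogonal case with $\ell=2$ is more delicate but is not needed in the application, where $n$ is even.

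What remains is to identify the sign $c$, and this is exactly the content of the aforementioned result of Bella\"iche and Chenevier. Since $(\pi,\mu)$ is a polarized RAESDC representation, the self-duality $\pi\cong\pi^\vee\otimes(\mu\circ\Det)$ is reflected at the archimedean place, where the $L$-parameter of $\pi_\infty$ is an explicit direct sum of characters of $\W_\RR$ induced from $\W_\CC$, determined by the weight $a$; the type of its self-duality — equivalently the sign $c$ — can be computed from that parameter together with the character $\mu_\infty$, and the outcome is precisely the dichotomy in the statement: the invariant form is symplectic exactly when $n$ is even and $\mu_\infty(-1)=(-1)^r$, and orthogonal otherwise (in particular whenever $n$ is odd). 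Together with the previous two paragraphs this proves the lemma. I expect the only genuinely delicate point to be this last step — not the appeal to Bella\"iche and Chenevier as such, but verifying that their normalisations (the unitarily normalised versus arithmetic Local Langlands correspondence, the twist by $|\Det|_v^{(1-n)/2}$ appearing in~\eqref{eq:star}, and the exact relation between $\mu$, the integer $r$ recording its algebraic part, and $\rho_\lambda(\mu)$) are such that their sign condition becomes the condition $\mu_\infty(-1)=(-1)^r$ as stated.
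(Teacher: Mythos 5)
Your proof is correct and follows essentially the same route as the paper's: extract a nondegenerate pairing from the essential self-duality, use absolute irreducibility and Schur's lemma to pin it down up to scalar (hence symmetric or alternating), and invoke Bella\"iche--Chenevier \cite[Cor 1.3]{BC11} to determine the sign from $n$, $\mu_\infty(-1)$ and~$r$. The only presentational difference is that you spell out the lattice/rescaling argument making the form perfect over $\cO_{\overline{M}_\lambda}$, whereas the paper dispatches that reduction by citing the argument of Remark~2.2 of Part~I.
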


\begin{rem}
When $n$ is even, it may happen that the image of $\rho_\lambda(\pi)$ is contained in $\GSp_{n}(\cO_{\overline{M}_{\lambda}})$ as well as $\GO_{n}(\cO_{\overline{M}_{\lambda}})$ after conjugation, cf.\ footnote~1 of \cite{BC11} for an example when $n=2$. The point is that while $\rho_\lambda(\pi)$ is completely characterised by $\pi$ alone, there are generally several choices of $\mu$ for the same $\pi$ with different values of $\mu_\infty(-1)$.
\end{rem}

\begin{proof}
By an argument as in Remark 2.2 of \cite{partI} (where the residual irreducibility is used), the proof is reduced to showing that the image is contained in either $\GSp_{n}(\overline{M}_{\lambda})$ or $\GO_{n}(\overline{M}_{\lambda})$.

We start with an easy observation. Let $\rho:\Gamma\ra \GL(V)$ be an irreducible representation on an $n$-dimensional vector space $V$ over $\overline{M}_\lambda$. Fix a basis $\{e_i\}$ for $V$ and write $\{e_i^\vee\}$ for the dual basis. Suppose that $(\rho^\vee,V^\vee)\simeq (\rho\otimes \chi,V)$ for a character $\chi$ of $\Gamma$. Let $A$ be an $n\times n$ matrix representing one such isomorphism. Then $A^{t}=\delta A$ for $\delta\in \{\pm1\}$, which is called the sign of $(\rho,\chi)$. (See the introduction of \cite{BC11}.) Then elementary linear algebra shows that if $\delta=1$ (resp. $\delta=-1$) then there exists a nondegenerate symmetric (resp. alternating) form $V\otimes V\ra \overline{M}_\lambda$ such that
$B(\gamma v,\gamma w)=\chi(\gamma)B(v,w)$ for $\gamma\in \Gamma$ and $v,w\in V$.

So the lemma amounts to the assertion that $(\rho,\chi)=(\rho_\lambda(\pi),\rho_\lambda(\mu))$ has sign 1 (resp. $-1$) if $n$ is even and $\mu_\infty(-1)=(-1)^r$ (resp.\ otherwise). This is exactly \cite[Cor 1.3]{BC11}.
\end{proof}

\section{Existence of self-dual automorphic representations with prescribed local conditions}\label{sec:existence}

The goal of this section is to prove the existence of a regular algebraic self-dual
cuspidal automorphic representation of $\GL_n(\A_\Q)$ with some particular local properties
when $n$ is even, for the application to the inverse Galois problem in Section~\ref{sec:conclusion}.
As we utilise Arthur's classification for representations of classical groups, our result depends on a
few hypotheses which his work depends on. (See Remark~\ref{r:hypotheses} below.) The reader may skip to
the next section after getting familiar with the notation of \S\ref{sub:app-to-existence} if he or she is
willing to accept Theorem~\ref{t:existence-self-dual} below.

We adopt the convention that all irreducible representations of $p$-adic or real groups are assumed admissible.
Whenever it is clear from the context, we often write a representation or an $L$-parameter to mean an appropriate
isomorphism or equivalence class thereof in favour of simplicity. We did not specialize to $F=\Q$ when recalling facts in
\S\ref{sub:existence} and \S\ref{sub:Arthur} below since the exposition hardly simplifies by doing so. The attentive reader
will notice that Theorem \ref{t:existence-self-dual} easily extends to the case over any totally real field.

\subsection{Plancherel measures}\label{sub:Plancherel}

This subsection is a reminder of some facts about Plancherel measures on $p$-adic groups. Let $G$ be a connected
reductive group over a non-archimedean or archimedean local field $K$. Write $G(K)^{\wedge}$ for the unitary dual
of $G(K)$, namely the set of all irreducible unitary representations of $G(K)$ equipped with the Fell topology.
Let $X^\ur_\unit(G(K))$ denote the set of all unitary unramified characters of $G(K)$ in the usual sense.
(This is $\mathrm{Im} X(G)$ on page 239 of \cite{Wal03}.)  Harish-Chandra proved
(cf.~\cite{Wal03}) that there is a natural Borel measure $\pl$ on $G(K)^{\wedge}$, called the Plancherel measure, satisfying
\begin{equation*}\phi(1)=\int_{G(K)^{\wedge}} \hat{\phi}(\tau) \pl,\quad \phi\in C^\infty_c(G(K)),\end{equation*}
where $\hat{\phi}$ is the function defined by $\hat{\phi}(\tau):=\tr \tau(\phi)$. Let $\Theta(G(K))$ denote the Bernstein variety,
which is a (typically infinite) disjoint union of affine complex algebraic varieties over $\C$. Viewing $\Theta(G(K))$ also as the
topological space on $\C$-points, the association of supercuspidal support defines a continuous map $\nu:G(K)^{\wedge}\ra \Theta(G(K))$
(\cite[Thm 2.2]{Tad88}).
In prescribing local conditions for automorphic representations we will often consider the following kind of subsets. The terminology
is non-standard and only introduced to save words.

\begin{defi}\label{d:prescribable}
  A subset $\hat{U}$ of the unitary dual $G(K)^{\wedge}$ is said to be \emph{prescribable} if
  \begin{itemize}
    \item $\hat{U}$ is a Borel set which is $\pl$-measurable with finite positive volume,
    \item $\nu(\hat{U})$ is contained in a compact subset of $\Theta(G(K))$, and
    \item for each Levi subgroup $L$ of $G$ and each discrete series $\sigma$ of $L(K)$, consider the function on
    $X^\ur_\unit(L(K))$ whose value at $\chi$ is the number of irreducible subquotients lying in $\hat{U}$ (counted with multiplicity)
    of the normalized induction of $\sigma\otimes \chi$ to $G$. Then the set of points of discontinuity has measure 0.
  \end{itemize}
\end{defi}

To show the flavour of this somewhat technical definition, we mention three examples for such subsets. The first example is
 the subset of unramified (resp. spherical) representations in
$G(K)^{\wedge}$ if $K$ is non-archimedean (resp. archimedean). The second example is the set of all $\tau\in G(K)^{\wedge}$ in a fixed Bernstein component, i.e.\ those
$\tau$ with the same supercuspidal support up to inertia equivalence. Finally the set $\{\tau\otimes \chi:\chi\in X^\ur_\unit(G(K))\}$
for a unitary discrete series $\tau$ of $G(K)$ also satisfies the requirements. (By a discrete series we mean an irreducible representation
whose matrix coefficients are square-integrable modulo center.) If $G$ is anisotropic over $K$ (which is true if $G$ is semisimple) then
$X^\ur_\unit(G(K))$ is trivial so the last example is a singleton.

\subsection{Existence of automorphic representations}\label{sub:existence}

In this subsection we recall one of the few existence theorems in~\cite{S}, which are based on the principle that the local
components of automorphic representations at a fixed prime are equidistributed in the unitary dual according to the Plancherel
measure. The reader is invited to see its introduction for more references in this direction.
There is a different approach to the existence of cuspidal automorphic representations via Poincar\'{e} series
(without proving equidistribution), cf.\ \cite[\S4]{KLS1}, \cite{Mui10}.

Let $G$ be a connected reductive group over a totally real number field $F$ such that
\begin{itemize}
  \item $G$ has trivial center and
  \item $G(F_w)$ contains an $\R$-elliptic maximal torus for every real place $w$ of $F$.
\end{itemize}
The first condition was assumed in~\cite{S} and it is kept here as it is harmless for our purpose below. However it
should be possible to dispense with the condition by fixing central character in the trace formula argument there.
Now let $S$ be a finite set of finite places of $F$. Let $\pl_v$ denote the Plancherel measure on $G(F_v)^\wedge$ for
$v\in S$. Let $\hat{U}_v\subset G(F_v)^{\wedge}$ be a prescribable subset for each $v\in S$.

\begin{prop}\label{p:existence2} There exists a cuspidal automorphic representation $\tau$ of $G(\A_F)$ such that
\begin{enumerate}
\item $\tau_v\in \hat{U}_v$ for all $v\in S$,
\item $\tau$ is unramified at all finite places away from $S$,
\item $\tau_w$ is a discrete series whose infinitesimal character is sufficiently regular for every infinite place $w$.
\end{enumerate}
\end{prop}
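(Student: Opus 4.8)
The plan is to deduce Proposition~\ref{p:existence2} from the automorphic Plancherel equidistribution theorem of~\cite{S}: the statement is essentially a repackaging of one of the existence corollaries there, and the task is to see that prescribable subsets are exactly the kind of local conditions that theorem accepts. First I would fix the archimedean datum. Since $G(F_w)$ contains an $\R$-elliptic maximal torus it has discrete series, so for each real place $w$ I fix a discrete series $L$-packet with a common, sufficiently regular, infinitesimal character $\xi_w$, and set $\xi_\infty=(\xi_w)_w$. At the finite places $v\notin S$ I fix hyperspecial maximal compact subgroups $K_v\subset G(F_v)$, and for each $v\in S$ a neighbourhood basis $\{K_v^{(m)}\}_{m\ge 1}$ of $1$ by open compact subgroups. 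Consider the family of cuspidal automorphic representations $\tau$ of $G(\A_F)$ that are unramified at every $v\notin S\cup\{\infty\}$, have $\tau_w$ in the packet of $\xi_w$ at every infinite place $w$, and satisfy $\prod_{v\in S}(\tau_v)^{K_v^{(m)}}\neq 0$; this family is finite by the Harish-Chandra finiteness of spaces of cusp forms of bounded level and fixed infinitesimal character, and to it one attaches a Radon measure $\hat\mu^{(m)}$ on $\prod_{v\in S}G(F_v)^{\wedge}$, a suitably weighted (by automorphic multiplicities and by $\dim(\tau_v)^{K_v^{(m)}}$) and normalised sum of Dirac masses at the tuples $(\tau_v)_{v\in S}$. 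With $\xi_\infty$ held fixed and regular enough, the theorem of~\cite{S} asserts that $\hat\mu^{(m)}\to\prod_{v\in S}\pl_v$ as $m\to\infty$, in the sense that $\hat\mu^{(m)}(\hat\phi)\to\prod_{v\in S}\pl_v(\hat\phi)$ for every $\hat\phi=\prod_{v\in S}\hat\phi_v$ with $\phi_v\in C^\infty_c(G(F_v))$.

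The next step is to apply this to $\hat U:=\prod_{v\in S}\hat U_v$, and the three bullets of Definition~\ref{d:prescribable} are exactly tailored to make this work. Borel measurability with finite positive volume gives $\prod_{v\in S}\pl_v(\hat U_v)\in(0,\infty)$; relative compactness of each $\nu(\hat U_v)$ in $\Theta(G(F_v))$ confines the supercuspidal supports to finitely many Bernstein components, which keeps all the quantities finite and prevents escape of mass; and the condition on the discontinuity loci of the subquotient-counting functions on $X^{\ur}_{\unit}$ of Levi subgroups is precisely what forces the topological boundary $\partial\hat U$ to be $\prod_{v\in S}\pl_v$-null, which (together with Sauvageot's density theorem, used to pass from the special test functions $\hat\phi$ above to indicators) lets the portmanteau theorem apply. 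Hence $\hat\mu^{(m)}(\hat U)\to\prod_{v\in S}\pl_v(\hat U_v)>0$, so $\hat\mu^{(m)}(\hat U)>0$ for all sufficiently large $m$. For such an $m$, a point of the support of $\hat\mu^{(m)}$ lying in $\hat U$ is exactly a cuspidal automorphic representation $\tau$ with $\tau_v\in\hat U_v$ for every $v\in S$, unramified at every $v\notin S\cup\{\infty\}$ (it has a nonzero $K_v$-fixed vector there), and with $\tau_w$ a discrete series of infinitesimal character $\xi_w$, sufficiently regular by construction, at every infinite place $w$. This is the claim; the extension from $F=\Q$ to a general totally real $F$ requires no change, as already noted.

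The main obstacle is the equidistribution input itself, which I would quote rather than reprove: its substance is in~\cite{S} and rests on the invariant (or stable) trace formula with a test function $\phi=\phi_\infty\,\phi_S\,\phi^{S,\infty}$ chosen so that $\phi^{S,\infty}$ is the unit of the hyperspecial Hecke algebra, $\phi_\infty$ is an Euler--Poincar\'e/pseudo-coefficient function for the packet of $\xi_\infty$ (regularity of $\xi_\infty$ being used to ensure that only the \emph{cuspidal} spectrum survives on the spectral side), and $\phi_v$ for $v\in S$ is chosen, via Sauvageot-type density, so that $\hat\phi_v$ approximates the indicator of $\hat U_v$; one then shows the geometric side is asymptotically dominated by the identity contribution, proportional to $\prod_v\phi_v(1)$, as the level at $S$ shrinks. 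Granting all of this from~\cite{S}, the only thing to be checked in the present paper, and the one point at which I would go into detail, is that the specific subsets $\hat U_v$ we use in \S\ref{sub:app-to-existence} are indeed prescribable, so that the theorem applies verbatim.
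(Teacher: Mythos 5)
Your proposal is correct and follows essentially the same route as the paper's proof: both reduce to the automorphic Plancherel equidistribution theorem of \cite{S} (Theorem~5.8 there, via Theorem~4.11 and Corollary~4.12) and both observe that the hypotheses in Definition~\ref{d:prescribable} are exactly what is needed so that the indicator function of $\hat U=\prod_{v\in S}\hat U_v$ falls within the scope of Sauvageot's density principle \cite[Thm~7.3]{Sau97}, the last bullet of the definition matching condition~(a)(1) of that theorem. The paper's proof is terser, merely citing the chain of results from \cite{S}, whereas you unwind the mechanism behind them, but the key points and references are the same.
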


The regularity condition above should be explained. Fix a maximal torus $T$ and a Borel subgroup $B$ containing $T$ in
$G$ over $\C$ (the base change of $G$ to $\C$ via $w:F\hra \C$). Let $\Omega$ denote the Weyl group of $T$ in $G$.
The infinitesimal character $\chi_w$ of $\tau_w$ above, which may be viewed as an element of $X^*(T)\otimes_\Z \Q$, is
\emph{sufficiently regular} if there is an $\sigma\in \Omega$ such that $\lg \sigma\chi_w,\alpha^\vee\rg\gg0$ for every
$B$-positive coroot $\alpha^\vee$ of $T$ in $G$. (Precisely the condition is that $\lg \sigma\chi_w,\alpha^\vee\rg\ge C$, where $C$
is a large enough constant depending only on $G$, $S$ and $\{\hat U_v\}_{v\in S}$.) This condition is independent of the choice of $T$ and $B$.

\begin{proof}
 Our proposition is the analogue of Theorem~5.8 of~\cite{S} except that a priori a weaker condition on
 $\hat{U}:=\prod_{v\in S} \hat{U}_v$ is assumed here. Let us explain this point. By the very definition of
 prescribable subsets, the characteristic function on $\hat{U}$, denoted $\triv_{\hat{U}}$, belongs to the
 class of functions to which Sauvageot's density principle \cite[Thm 7.3]{Sau97} applies. (Our last condition
 in Definition \ref{d:prescribable} corresponds to condition (a)(1) in his theorem.) Hence we can take
 $\hat{f}_S=\triv_{\hat{U}}$ in \cite[Thm 4.11]{S} so that we have the analogue of \cite[Cor 4.12]{S} for our
 $\hat{U}$. Therefore the analogue of \cite[Thm 5.8]{S} is deduced through the same argument deriving that theorem
 from \cite[Cor 4.12]{S} originally.
\end{proof}

\subsection{Arthur's endoscopic classification for $\SO_{2m+1}$}\label{sub:Arthur}

Arthur \cite{Arthur} classified local and global automorphic representations of symplectic and special orthogonal
groups via twisted endoscopy relative to general linear groups. For our purpose it suffices to recall some facts
in the case of odd orthogonal groups.

Let $F$ be a number field. Denote by $\SO_{2m+1}$ the split special orthogonal group over $F$. Note that the dual
group of $\SO_{2m+1}$ is $\Sp_{2m}(\C)$. Write $$\xi:\Sp_{2m}(\C)\hra \GL_{2m}(\C)$$ for the standard embedding,
and put $\cL_{F_v}:=\W_{F_v}$ (resp. $\cL_{F_v}:=\W_{F_v}\times \SL_2(\C)$) according as $v$ is an infinite (resp. finite)
place. An $L$-parameter $\phi_v:\cL_{F_v}\ra \GL_{2m}(\C)$ is said to be \emph{of symplectic type} if it preserves a suitable
symplectic form on the $2m$-dimensional space, or equivalently, if $\phi_v$ factors through $\xi$ (after conjugating by an
element of $\GL_{2m}(\C)$).
For a place $v$ of $F$, let $\rec_v$ denote the (unitarily normalised) local Langlands bijection from the set of irreducible
representations of $\GL_{r}(F_v)$ to the set of $L$-parameters $\cL_{F_v}\ra \GL_r(\C)$ for any $r\in \Z_{\ge 1}$. When $v$ is
finite, there is a standard dictionary for going between local $L$-parameters for $\GL_r$ and $r$-dimensional Frobenius-semisimple
Weil-Deligne representations of $\W_{F_v}$ in a bijective manner.

For each local $L$-parameter $\phi_v:\cL_{F_v}\ra \Sp_{2m}(\C)$ (or a local $L$-parameter for $\GL_{2m}$ of symplectic type), Arthur
associates an $L$-packet $\Pi_{\phi_v}$ consisting of finitely many irreducible representations of $\SO_{2m+1}(F_v)$. Moreover each
irreducible representation belongs to the $L$-packet for a unique parameter (up to equivalence). If $\phi_v$ has finite centraliser
group in $\Sp_{2m}(\C)$ so that it is a discrete parameter, then $\Pi_{\phi_v}$ consists only of discrete series. A similar construction
was known earlier by Langlands (deriving from Harish-Chandra's results on real reductive groups) when $v$ is an infinite place of $F$.

Now let $\tau$ be a discrete automorphic representation of $\SO_{2m+1}(\A_F)$. Arthur shows the existence of a self-dual isobaric
automorphic representation $\pi$ of $\GL_{2m}(\A_F)$ which is a functorial lift of $\tau$ along the embedding $\Sp_{2m}(\C)\hra \GL_{2m}(\C)$.
In the generic case (in Arthur's sense, i.e.\ when the $\SL_2$-factor in the global $A$-parameter for $\tau$ has trivial image),
this means that for the unique $\phi_v$ such that $\tau_v\in \Pi_{\phi_v}$, we have
\begin{equation*}\rec_v(\pi_v)\simeq \xi\phi_v,\quad \forall v.\end{equation*}

\begin{rem}\label{r:hypotheses}
Arthur's result \cite{Arthur} is conditional on the stabilisation of the twisted trace formula and a few expected technical
results in harmonic analysis as explained there.
See \cite[1.18]{BMM} for a summary of these issues. We also note that the references [A24]-[A28] in Arthur's book are
in preparation at the time our paper is finished. Since these results are expected to become available from
ongoing projects by others or by Arthur himself, the authors think that one need not strive hard to avoid using them. However see
Remark~\ref{r:alternative} below for a possible alternative approach.
\end{rem}

\subsection{Application: Existence of self-dual representations}\label{sub:app-to-existence}

We seek self-dual representations with specific local conditions. To describe them we need to set things up. It
is enough to restrict the material of previous subsections to the case $F=\Q$.
Let $n$ be a positive \emph{even} integer.
Let $p,q,t$ be distinct rational primes and assume that the order of $q$ mod $p$ is $n$. Denote by $\Q_{q^n}$ an
unramified extension of $\Q_q$ of degree $n$.
Choose a tamely ramified character $\chi_q:\Q_{q^n}^\times\ra \C^\times$ of order $2p$ such that $\chi_q(q)=-1$ and
$\chi_q|_{\Z_{q^n}^{\times}}$ is of order $p$ (cf.\ Section 3.1 of \cite{KLS1}
and the definition of maximal induced places of order~$p$ made in Part~I \cite{partI}). By local class field theory
we also regard $\chi_q$ as a character of $G_{\Q_q}$ (or $\W_{\Q_q}$). Put
 \begin{equation*}\rho_q:=\Ind^{G_{\Q_{q}}}_{G_{\Q_{q^{n}}}} (\chi_q).\end{equation*}
We write $\WD(\rho_q)$ for the associated Weil-Deligne representation, giving rise to a local $L$-parameter $\phi_q$ for
$\GL_n(\Q_q)$. Since $\rho_q$ is irreducible and symplectic (\cite[Prop 3.1]{KLS1}), the parameter $\phi_q$ factors through
$\widehat{\SO_{n+1}(\C)}=\Sp_{n}(\C)\subset \GL_n(\C)$ (after conjugation if necessary) and defines a discrete $L$-parameter
of $\SO_{n+1}(\Q_q)$. So the $L$-packet $\Pi_{\phi_q}$ consists of finitely many discrete series of $\SO_{n+1}(\Q_q)$.
(In fact \cite[\S5.3]{KLS1} exploits the fact that $\Pi_{\phi_q}$ contains a generic supercuspidal representation as shown by
Jiang and Soudry. In our method it suffices to have the weaker fact that $\pi_{\phi_q}$ contains a discrete series.)

It is a little more complicated to explain the objects at $t$. Let $\mathrm{St}_2$ denote the Steinberg representation of
$\GL_2(\Q_t)$ which appears as a subquotient of the unnormalized parabolic induction of the trivial character. Since
$\mathrm{St}_2$ has trivial central character, it corresponds to an $L$-parameter valued in $\SL_2(\C)$. Let $M$ be a Levi subgroup of
$\SO_{n+1}$ isomorphic to $\SO_3\times (\GL_1)^{\frac{n}{2}-1}$ so that $\hat{M}\simeq \SL_2(\C)\times \GL_1(\C)^{\frac{n}{2}-1}$.
Consider a local $L$-parameter $\phi^M_t:\W_{\Q_t}\times \SL_2(\C)\ra \hat{M}$ having the form
\begin{equation*}\phi^M_t=\left(\rec_t(\mathrm{St}_2),\phi_1,\cdots ,\phi_{\frac{n}{2}-1} \right),\quad \phi_i\in X^{\ur}_\unit(\W_{\Q_t}),\end{equation*}
where $X^{\ur}_\unit(\W_{\Q_t})$ denotes the set of unitary unramified characters of $\W_{\Q_t}$. Then $\phi^M_t$ is a discrete parameter for $M$.
Writing $\eta^M:\hat{M}\hra \Sp_n(\C)$ for a Levi embedding (canonical up to conjugacy), we see that $\xi\eta^M\phi^M_t$ is the $L$-parameter for
$\GL_n$ corresponding to the $n$-dimensional Weil-Deligne representation
\begin{equation*}\rec_t(\mathrm{St}_2)\oplus \left( \bigoplus_{i=1}^{\frac{n}{2}-1} \phi_i\oplus \phi_i^{-1}\right).\end{equation*}
Arthur associates a local $L$-packet $\Pi_{\phi^M_t}$ of irreducible $M(\Q_t)$-representations to $\phi^M_t$.
Define $\hat{U}_t$ to be the set of all irreducible subquotients of the parabolic induction of $\tau^M_t\otimes \chi$ from
$M(\Q_t)$ to $\SO_{n+1}(\Q_t)$ as $\tau^M_t$ runs over $\Pi_{\phi^M_t}$ and $\chi$ runs over $X^\ur_\unit(M(\Q_t))$. Since the
effect of the parabolic induction on $L$-parameters is simply composition with $\eta^M$ (this is implicit in the proof of the
proposition 2.4.3 in \cite{Arthur}), the $L$-parameter for any $\tau_t\in \hat{U}_t$ has the following form (after composing with $\xi$):
\begin{equation}\label{e:Galois-at-t}
   \lambda_0\rec_t(\mathrm{St}_2)\oplus \left( \bigoplus_{i=1}^{\frac{n}{2}-1} \lambda_i\phi_i\oplus \lambda_i^{-1}\phi_i^{-1}\right),
 \quad \lambda_i\in X^{\ur}_\unit(\W_{\Q_t}),~0\le i\le \frac{n}{2}-1.
\end{equation}
Notice that $\hat{U}_t$ is a prescribable subset. Among the conditions of Definition \ref{d:prescribable} we only check the last
one as the others are reasonably easy. By the way $\hat{U}_t$ is constructed, the function on $X^\ur_\unit(L(F))$ in that condition
is either identically zero unless $L=M$ (up to conjugacy) and $\sigma$ is in the $X^\ur_\unit(L(F))$-orbit of $\tau^M_t$ for some
$\tau^M_t\in \Pi_{\phi^M_t}$. So we may assume that $L=M$ and that $\sigma=\tau^M_t$. Then a version of the generic irreducibility
of parabolic induction (see \cite[Thm 3.2]{Sau97} attributed to Waldspurger, cf. the fourth entry in Appendix A of \cite{S} for a minor correction)
implies that the function is the constant function 1 away from a closed measure zero set. This verifies the condition as desired.

\begin{thm}\label{t:existence-self-dual}
There exists a cuspidal automorphic representation $\pi$ of $\GL_{n}(\A_\Q)$ such that
\begin{enumerate}[(i)]
\item\label{item:i} $\pi$ is unramified away from $q$ and $t$,
\item $ \rec_q(\pi_q)\simeq \WD(\rho_q)$,
\item $\rec_t(\pi_t)$ has the form \eqref{e:Galois-at-t},
\item $\pi_\infty$ is of symplectic type and regular algebraic,
\item\label{item:v} $\pi\simeq \pi^\vee$, and
\item\label{item:vi} the central character of $\pi$ is trivial.
\end{enumerate}
\end{thm}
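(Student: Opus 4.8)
The plan is to produce the desired $\pi$ on $\GL_n(\A_\Q)$ as the functorial transfer, via Arthur's classification recalled in \S\ref{sub:Arthur}, of a cuspidal automorphic representation $\tau$ of $\SO_{n+1}(\A_\Q)$ whose existence is guaranteed by Proposition~\ref{p:existence2}. First I would set $G=\SO_{n+1}$ over $F=\Q$: it is split, semisimple with trivial center, and $\SO_{n+1}(\R)$ contains an $\R$-elliptic maximal torus (as $n+1$ is odd), so the two bullet hypotheses preceding Proposition~\ref{p:existence2} hold. Take $S=\{q,t\}$. At $q$, let $\hat U_q=\Pi_{\phi_q}$, the (finite, nonempty) $L$-packet of discrete series of $\SO_{n+1}(\Q_q)$ attached to the discrete symplectic parameter $\phi_q=\WD(\rho_q)$ constructed in \S\ref{sub:app-to-existence}; this is prescribable (it is a finite union of singletons of the form ``discrete series twisted by unramified unitary characters'', which was listed as the third example after Definition~\ref{d:prescribable}, and here the relevant $X^\ur_\unit$ is trivial since we can fix the unramified twist, or one simply takes the full packet). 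At $t$, take $\hat U_t$ to be exactly the prescribable set constructed in \S\ref{sub:app-to-existence}, whose prescribability was already checked there. Apply Proposition~\ref{p:existence2} to obtain a cuspidal $\tau$ on $\SO_{n+1}(\A_\Q)$ with $\tau_q\in\hat U_q$, $\tau_t\in\hat U_t$, $\tau$ unramified outside $\{q,t\}$, and $\tau_\infty$ a discrete series with sufficiently regular infinitesimal character.

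Next I would feed $\tau$ into Arthur's classification. Because $\tau_\infty$ is a discrete series and $\tau_q$ lies in the packet of a discrete (hence elliptic) parameter $\phi_q$, the global $A$-parameter $\psi$ of $\tau$ cannot be of Arthur-$\mathrm{SL}_2$ type: the nontrivial $\mathrm{SL}_2$-action would force non-tempered, non-discrete-series local components incompatible with $\tau_q\in\Pi_{\phi_q}$ (a discrete parameter) and with the tempered $\tau_\infty$. Hence $\psi$ is generic in Arthur's sense, the $\mathrm{SL}_2$-factor acts trivially, and Arthur furnishes a self-dual isobaric automorphic representation $\pi$ of $\GL_n(\A_\Q)$ with $\rec_v(\pi_v)\simeq \xi\phi_v$ for the parameter $\phi_v$ with $\tau_v\in\Pi_{\phi_v}$, at all $v$. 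This already gives (v) $\pi\simeq\pi^\vee$, and unramifiedness outside $\{q,t\}$ gives (i) since $\tau$ and hence each $\phi_v$ is unramified there. At $v=q$ we get $\rec_q(\pi_q)\simeq\xi\phi_q\simeq\WD(\rho_q)$, which is (ii). At $v=t$, since $\tau_t\in\hat U_t$ is by construction a subquotient of the parabolic induction from $M$ of $\tau^M_t\otimes\chi$, and the transfer of parameters along parabolic induction is composition with the Levi embedding $\eta^M$, the parameter $\xi\eta^M\phi^M_t$ has exactly the shape \eqref{e:Galois-at-t}; this is (iii). At $\infty$, the parameter $\xi\phi_\infty$ is a direct sum of a symplectic self-dual discrete-series parameter, so $\pi_\infty$ is of symplectic type, and the sufficient regularity of the infinitesimal character of $\tau_\infty$ translates (via $\xi\phi_\infty$ and the archimedean local Langlands dictionary) into $\pi_\infty$ being regular algebraic --- giving (iv). For (vi): the central character of $\pi$ corresponds to $\det\circ(\xi\phi_v)$ everywhere; since $\phi_v$ is valued in $\Sp_n(\C)$ it has trivial determinant, so the central character of $\pi$ is trivial at every place, hence globally trivial.

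The one point requiring genuine care --- and which I expect to be the main obstacle --- is the step deducing that $\pi$ is \emph{cuspidal} on $\GL_n$, rather than merely isobaric, and relatedly that $\pi$ is globally discrete/tempered so that Arthur's formalism applies cleanly. A self-dual isobaric $\pi$ transferred from $\SO_{n+1}$ decomposes as an isobaric sum $\boxplus_i \pi_i$ of mutually distinct cuspidal self-dual $\pi_i$ on smaller $\GL$'s, corresponding to a decomposition of the parameter; I need to rule out a nontrivial such decomposition. The key leverage is the local parameter at $q$: $\phi_q=\Ind^{G_{\Q_q}}_{G_{\Q_{q^n}}}(\chi_q)$ is \emph{irreducible} as an $n$-dimensional representation (this is exactly \cite[Prop~3.1]{KLS1}, and is why the order of $q$ mod $p$ is taken to be $n$). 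An isobaric decomposition $\pi=\boxplus_i\pi_i$ would force $\WD(\rho_q)=\bigoplus_i\rec_q(\pi_{i,q})$ to be a corresponding direct sum of lower-dimensional pieces, contradicting irreducibility unless there is only one summand. Hence $\pi$ is cuspidal. (Compare the use of a maximally induced local place to force irreducibility in \cite[\S5]{KLS1} and in Part~I.) I would also remark, following Remark~\ref{r:hypotheses}, that this entire argument is conditional on Arthur's hypotheses, and note Remark~\ref{r:alternative} for the alternative route. The remaining verifications --- that the $\hat U_v$ meet Definition~\ref{d:prescribable}, and the bookkeeping translating ``sufficiently regular infinitesimal character'' at $\infty$ into ``regular algebraic'' --- are routine given the examples following Definition~\ref{d:prescribable} and standard archimedean local Langlands, so I would treat them briefly.
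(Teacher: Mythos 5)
Your proposal is correct and takes essentially the same route as the paper: apply Proposition~\ref{p:existence2} to $G=\SO_{n+1}$ with $S=\{q,t\}$, $\hat U_q=\Pi_{\phi_q}$ and $\hat U_t$ as in \S\ref{sub:app-to-existence}, then pass to the $\GL_n$-transfer via Arthur, with cuspidality forced by irreducibility of $\WD(\rho_q)$ and triviality of the central character coming from the parameter landing in $\Sp_n(\C)$. Your phrasing of the cuspidality step (ruling out a nontrivial isobaric decomposition) is equivalent to the paper's remark that $\pi_q$ is supercuspidal, and you additionally make explicit a justification for genericity of the global $A$-parameter, a point the paper leaves implicit.
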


As we rely on Arthur's work \cite{Arthur}, our theorem is conditional as explained in Remark~\ref{r:hypotheses}.

\begin{proof}
Apply Proposition~\ref{p:existence2} with $S=\{q,t\}$, $\hat{U}_q=\Pi_{\phi_q}$ and $\hat{U}_t$ as above. We have seen that
$\hat{U}_q$ and $\hat{U}_t$ are prescribable. Hence there exists a cuspidal automorphic representation $\tau$ of $\SO_{2n+1}(\A_\Q)$ such that
\begin{enumerate}
\item $\tau_v$ is unramified at every finite place $v\notin \{q,t\}$,
\item $\tau_q\in \Pi_{\phi_q}$,
\item $\tau_t\in \hat{U}_t$,
\item $\tau_\infty$ is a discrete series whose infinitesimal character is sufficiently regular.
\end{enumerate}
Then the functorial lift $\pi$ of $\tau$ as in \S\ref{sub:Arthur} has the desired properties (\ref{item:i})--(\ref{item:v})
of the theorem by construction. (To verify the regular algebraicity of $\pi$, it is enough to note that the $2n$-many exponents
for $z/\bar{z}$ in the parameter for $\pi_\infty$, coming from that of $\tau_\infty$, at infinite places are in $\frac{1}{2}\Z\backslash \Z$
and mutually distinct, cf.\ the bottom line of \cite[p.557]{KLS1}.) To see the cuspidality of $\pi$, it suffices to note that $\pi_q$ is
supercuspidal since $\rec_q(\pi_q)\simeq \WD(\rho_q)$, which is irreducible.
Finally (\ref{item:vi}) is derived from the fact that the central character is trivial at (almost) all finite places. Indeed the central
character corresponds  to the determinant of the $L$-parameter for $\pi$ at each place via local class field theory, but the determinant
is trivial since the parameter factors through $\SO_{n+1}(\C)$.
\end{proof}

\begin{rem}\label{r:alternative}
A slightly different version for the existence of $\tau$ can be shown by using Theorem~5.13 of~\cite{S}, which is obtained via the simple
trace formula for $G$, instead of using Proposition~\ref{p:existence2}. Then one can prescribe $\tau_\infty$ to be any favourite discrete series
at the expense of losing control of ramification at two auxiliary primes. On the other hand, it is conceivable that
one can prove the existence of $\pi$ directly, without going through representations of $G$ (thus avoiding the use of twisted endoscopy),
via the simple twisted trace formula for $\GL_n$ due to Deligne-Kazhdan as long as one is willing to allow ramification at one auxiliary
prime in (\ref{item:i}). (This is allowable for our purpose.) The idea would be to prescribe test functions at $q$, $t$ and infinite places
using the twisted Paley-Wiener theorem due to Rogawski and Mezo, cf.\ \cite[Thm 3.2]{CC09}.
\end{rem}

\section{Compatible systems attached to the RAESDC representations from the previous section}\label{sec:compatible-systems}

Let $n$ be an even integer and let $t$ be an arbitrary prime.
Choose a prime $p$ different from~$t$ such that $p \equiv 1 \mod n$.
Chebotarev's density theorem allows us to choose a prime $q$ different from~$t$
(out of a positive density set)
such that $q^{n/2} \equiv -1 \mod p$.

From now on, we will restrict to triples of primes satisfying these conditions, and we will keep the
notation $p, q, t$ for these primes, chosen in the order specified above. This notation is also compatible with the one in the previous section.

Consider an automorphic representation $\pi$ as in Theorem~\ref{t:existence-self-dual} and the
trivial character $\triv$ of $\A_{\Q}^\times/\Q^\times$. Then $(\pi, \triv)$ is a RAESDC representation
by construction, to which the facts of Section~\ref{sec:RAESDC} apply. So there are a number field $M \subset \CC$
and a compatible system of Galois representations $\rho_{\lambda}(\pi):G_{\Q}\ra \GL_n(\overline{M}_\lambda)$ as
$\lambda$ ranges over all places of $M$ with the following properties for every $\lambda$ (below, $\ell$ denotes the rational prime dividing $\lambda$).

\begin{enumerate}[(1)]
\item $\rho_\lambda(\pi) \cong \rho_\lambda(\pi)^\vee \otimes \cyclo^{1-n}$ and $\det \rho_{\lambda}(\pi)=\cyclo^{n(1-n)/2}$,
\item $\rho_\lambda(\pi)$ and its residual representation are absolutely irreducible if $\lambda\nmid p,q$,
\item $\rho_{\lambda}(\pi)$ is unramified away from $R=\{q,t\}$ and the residue characteristic of $\lambda$,
\item $\rho_{\lambda}(\pi)|_{G_{\Q_\ell}}$ is de Rham and has Hodge-Tate weights as described in Section~\ref{sec:RAESDC};
in particular they are independent of $\lambda$ and distinct,
\item $\rho_{\lambda}(\pi)|_{G_{\Q_\ell}}$ is crystalline if $\ell\notin R$,
\item $\rho_{\lambda}(\pi)$ has image in $\GSp_{n}(\cO_{\overline{M}_\lambda})$ (after a conjugation) by Lemma~\ref{l:image-GSp} if $\lambda \nmid p,q$,
\item The multiplier of $\rho_{\lambda}(\pi)$ is $\cyclo^{1-n}$,
\item\label{item:atq} $\WD(\rho_{\lambda}(\pi)|_{G_{\Q_q}})^{\mathrm{F-ss}}\simeq \WD(\rho_q)\otimes |\cdot|^{(1-n)/2}$ if
$\lambda\nmid q$. In particular, $\rho_{\lambda}(\pi)|_{G_{\Q_q}}\simeq \rho_q\otimes \alpha_{\lambda}$, where
$\alpha_{\lambda}:G_{\Q_q}\rightarrow \overline{M}_{\lambda}^{\times}$ is an unramified character (i.e.\ $q$ is a
maximally induced place of order~$p$ in the terminology of Part~I~\cite{partI}), and
\item\label{item:att} $\WD(\rho_{\lambda}(\pi)|_{G_{\Q_t}})^{\mathrm{F-ss}}$ has the form \eqref{e:Galois-at-t} if
$\lambda\nmid t$. In particular the image of $I_t$ under $\rho_\lambda(\pi)$ is
generated by a transvection, because $\rec_t(\mathrm{St}_2)$ is the Weil-Deligne representation having restriction to $I_t$ of the form
$(1, N)$ with $N =          \left(
              \begin{array}{cc}
                0 & 1  \\
                0 & 0
              \end{array}
            \right) $.

\end{enumerate}

All these are easy consequences of the facts recollected in Section~\ref{sec:RAESDC}. Note that for $\lambda\nmid p, q$,
the residual representation of $\rho_{\lambda}(\pi)$ is irreducible since its restriction to $G_{\Q_q}$ is irreducible
for the reason that $\rho_q$ modulo $\lambda$ is irreducible (\cite[3.1]{KLS1}). In particular the irreducibility
hypothesis of Lemma~\ref{l:image-GSp} is satisfied. After a conjugation we may and will assume that the image lies
in $\GSp_{n}(\cO_{M_\lambda})$. The determinant of $\rho_{\lambda}(\pi)$ is computed easily by relating it to the central
character of $\pi$, which is trivial (Theorem~\ref{t:existence-self-dual}~(\ref{item:vi})), keeping in mind that the
normalisation of the correspondence involves a twist by $|\cdot|^{(1-n)/2}$, cf.\ part~(\ref{item:4}) of Section~\ref{sec:RAESDC}.
Finally in order to see the second isomorphism follows from the first in (8), observe that $\WD(\rho_q)$ is already Frobenius semisimple.
Since $\rho_q$ is irreducible, this forces $\
WD(\rho_{\lambda}(\pi)|_{G_{\Q_q}})$ to be already Frobenius semisimple.
Thus $\WD(\rho_{\lambda}(\pi)|_{G_{\Q_q}})\simeq \WD(\rho_q)\otimes |\cdot|^{(1-n)/2}$, which implies that
$\rho_{\lambda}(\pi)|_{G_{\Q_q}}\simeq \rho_q\otimes |\cdot|^{(1-n)/2}$ since $\WD$ is a fully faithful functor.

\section{Level-lowering and transvections}\label{sec:transvections}

Throughout this section $\pi$ will denote the (fixed) automorphic representation considered in Section~\ref{sec:compatible-systems}, and $(\rho_{\lambda}(\pi))_\lambda$, for $\lambda$ running through the primes of a number field $M \subset \CC$, the compatible system attached to~$\pi$.
As before we denote by $\overline{\rho}_\lambda(\pi)$ the residual representation of~$\rho_\lambda(\pi)$.

The aim of this section is to study the transvection contained in
the image of $\rho_\lambda(\pi)$ given by its restriction to $I_t$,
for every $\ell \neq t$, $\lambda \mid \ell$. We want to show that
when reducing modulo $\lambda$ this transvection is preserved, or
equivalently, that the residual mod $\lambda$ representation is
ramified at $t$, at least for a density one set of primes $\ell$ and
every $\lambda \mid \ell$. The main tool will be a level-lowering argument based on Theorem~4.4.1 of~\cite{BLGGT}.

One of the hypotheses in this theorem is that, when restricted to the Galois group of a suitable cyclotomic extension, the residual representation is irreducible. The following lemma will be used to meet this requirement:

\begin{lem}\label{teo:strongirreducible} Let $F$ be a
quadratic number field. Then $\overline{\rho}_\lambda(\pi)|_{ G_{F(\zeta_\ell)}}$ is absolutely irreducible for almost every $\ell$ (i.e., for all but finitely many primes),
and for every $\lambda \mid \ell$. Moreover, the finite exceptional set can be bounded independently of~$F$.
\end{lem}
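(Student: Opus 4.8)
The plan is to exploit the strong local structure of the compatible system at the prime $q$, namely that $\rho_\lambda(\pi)|_{G_{\Q_q}}\simeq \rho_q\otimes\alpha_\lambda = \Ind_{G_{\Q_{q^n}}}^{G_{\Q_q}}(\chi_q)\otimes\alpha_\lambda$, a place that is \emph{maximally induced of order $p$} in the sense of Part~I. The key observation is that irreducibility of an induced representation $\Ind_{G_{\Q_{q^n}}}^{G_{\Q_q}}(\chi)$ depends only on the conjugates of $\chi$ under $\Gal(\Q_{q^n}/\Q_q)\cong\ZZ/n$ being pairwise distinct, and for the mod-$\lambda$ reduction this is controlled by the order of $\chi_q$ mod $\lambda$. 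Since $\chi_q$ has order $2p$ with $\chi_q|_{\ZZ_{q^n}^\times}$ of order exactly $p$, its reduction $\overline{\chi}_q$ still has order divisible by $p$ as long as $\ell\nmid 2p$ (more precisely, as long as the residue field of $\lambda$ does not kill the order-$p$ part, which fails for only finitely many $\ell$, independently of everything else). Hence for almost all $\ell$ and all $\lambda\mid\ell$, $\overline{\rho}_\lambda(\pi)|_{G_{\Q_q}}$ is already irreducible, and moreover its restriction to $G_{\Q_{q^n}}$ is a sum of $n$ distinct characters, so no proper subfield of $\Q_{q^n}$ can witness a decomposition.

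The next step is to promote this to irreducibility after restricting to $G_{F(\zeta_\ell)}$. The subtlety is that $\overline{\rho}_\lambda(\pi)|_{G_{\Q_q}}$ irreducible does not by itself force $\overline{\rho}_\lambda(\pi)|_{G_{F(\zeta_\ell)}}$ irreducible, because $F(\zeta_\ell)$ is a nontrivial (though abelian, and of bounded-at-$q$ ramification) extension of $\Q$. I would argue as follows: suppose $\overline{\rho}_\lambda(\pi)|_{G_{F(\zeta_\ell)}}$ decomposes. Its restriction further to the decomposition group at a place of $F(\zeta_\ell)$ above $q$ is a subrepresentation of $\Ind_{G_{\Q_{q^n}}}^{G_{\Q_q}}(\overline{\chi}_q\alpha_\lambda)$ restricted to $G_{(F(\zeta_\ell))_w}$; since $q\ne\ell$ and $F$ is quadratic, the extension $(F(\zeta_\ell))_w/\Q_q$ is unramified of degree dividing $2\cdot(\text{ord of }q\text{ mod }\ell)$ composed with a degree-$\le 2$ piece, in any case \emph{unramified} over $\Q_q$, so $I_q$ acts through the same characters $\overline{\chi}_q^{q^i}|_{I_q}$. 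These are $n$ distinct characters of $I_q$ of order divisible by $p$ — this is the content of ``maximally induced of order $p$'' and is stable under mod-$\lambda$ reduction for $\ell\nmid 2pq$ — so the semisimplification of $\overline{\rho}_\lambda(\pi)|_{I_q}$ has $n$ distinct Jordan–Hölder factors. Combined with the fact (standard for maximally induced places, cf.\ the analysis in Part~I) that the Galois orbit structure forces any $G_{\Q_q}$-stable, hence any $G_{F(\zeta_\ell)_w}$-stable, subspace to be a sum of a $\langle q\rangle$-orbit of these characters, and that $\overline{\rho}_\lambda(\pi)|_{G_{\Q_q}}$ is a single such induced representation, one deduces that the only $G_{\Q}$-subrepresentations of $\overline{\rho}_\lambda(\pi)$ surviving the restriction would have to already be visible over $\Q_q$ — but there are none, as $\overline{\rho}_\lambda(\pi)|_{G_{\Q_q}}$ is irreducible. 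Making the quantifier ``subrepresentation over $G_{F(\zeta_\ell)}$'' interact correctly with ``over $G_{\Q_q}$'' is the point where one invokes that $[F(\zeta_\ell):\Q]$ is coprime-to-$p$ up to a bounded factor — actually $[F(\zeta_\ell):\Q]$ divides $2(\ell-1)$, and for the argument it suffices that $p\nmid[F(\zeta_\ell):\Q]$, which holds whenever $p\nmid\ell-1$; since $p$ is fixed this excludes only a density-zero, in fact explicit, set of $\ell$, and one absorbs the finitely many bad $\ell\le$ some bound into the exceptional set.

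The cleanest route, and the one I would actually write up, is to invoke a Clifford-theoretic lemma of the shape already used in Part~I: if $H\le G$ with $[G:H]$ coprime to $p$, and $\overline{\rho}|_{\text{(decomposition group at }q)}$ is irreducibly induced from a character of order divisible by $p$ of an index-$n$ subgroup with $n=\ord_p(q)$, then $\overline{\rho}|_H$ remains irreducible. Apply this with $G=G_\Q$, $H=G_{F(\zeta_\ell)}$: the index $[G_\Q:G_{F(\zeta_\ell)}]=[F(\zeta_\ell):\Q]$ divides $2(\ell-1)$, which is coprime to $p$ as soon as $p\nmid\ell-1$ and $p\ne 2$ (automatic since $n$ is even and $n\mid p-1$ forces $p$ odd unless $n$ is absurd). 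The finitely many remaining primes $\ell$ — those dividing $2pq$, those with $p\mid\ell-1$ that are below the Clifford bound, and any primes where the compatible system behaves badly, i.e.\ where $\overline{\chi}_q$ drops order — form a finite set depending only on $p,q,t$ and $n$, hence independent of $F$. That independence is immediate because $F$ only enters through the index $[F(\zeta_\ell):\Q]$, and replacing $F$ by another quadratic field changes that index by at most a factor of $2$, which is already accounted for.

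The main obstacle, I expect, is the Clifford-theory step: one must be careful that ``irreducible after restriction to the decomposition group at $q$, which is itself an irreducible induction of a character of $p$-power-related order'' genuinely propagates through a coprime-to-$p$ extension. The naive statement ``$\overline{\rho}|_{G_{\Q_q}}$ irreducible $\Rightarrow$ $\overline{\rho}|_H$ irreducible for $[G:H]$ coprime to $p$'' is false in general; what saves us is the very rigid shape of $\rho_q$ — its inertial action has $n$ distinct characters forming a single free orbit under $\langle q\rangle\cong\ZZ/n$ with $n=\ord_p(q)$, so the only subrepresentations of $\overline{\rho}|_{G_{\Q_q}}$, hence the only pieces a subrepresentation of $\overline{\rho}|_H$ could restrict to at $q$, are unions of $\langle q\rangle$-orbits, and there is exactly one such orbit (the whole thing). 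This is precisely the ``$(n,p)$-group'' mechanism imported from \cite{KLS1} and developed in Part~I, so I would cite the relevant lemma there rather than reprove it, and spend the writeup instead on verifying its hypotheses: that $\overline{\chi}_q$ has order divisible by $p$ for $\ell$ outside an explicit finite set, and that $n=\ord_p(q)$ which is built into the setup of Section~\ref{sec:compatible-systems}.
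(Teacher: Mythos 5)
Your approach has a genuine gap, and the paper's proof is quite different.

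The most immediate problem is the sentence claiming that requiring $p\nmid\ell-1$ ``excludes only a density-zero, in fact explicit, set of $\ell$.'' By Dirichlet, the primes $\ell\equiv 1\pmod p$ have density $1/(p-1)>0$. Since the lemma asserts irreducibility for \emph{all but finitely many} $\ell$, you cannot afford to throw away a positive-density set, and nothing in your argument handles those $\ell$.

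The deeper problem is the ``Clifford-theoretic lemma'' you invoke. The quantity that actually governs the Mackey decomposition of $\overline{\rho}_\lambda(\pi)$ restricted to a decomposition group at $q$ inside $G_{F(\zeta_\ell)}$ is not $[F(\zeta_\ell):\Q]$ but the residue degree $m$ of $q$ in $F(\zeta_\ell)$ (this being an unramified local extension when $\ell\ne q$ and $q$ is unramified in $F$). Restricting $\Ind_{G_{\Q_{q^n}}}^{G_{\Q_q}}(\overline{\chi}_q\alpha_\lambda)$ to $G_{F(\zeta_\ell)_w}$ splits it into $\gcd(n,m)$ irreducible summands: the $n$ inertial characters still appear, but the Frobenius that permutes them cyclically is now $\Frob_q^m$, whose orbits have length $n/\gcd(n,m)$. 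Any $G_{F(\zeta_\ell)}$-stable subspace need only be a union of these shorter orbits, so the local analysis at $q$ gives nothing when $\gcd(n,m)>1$. This failure is \emph{not} rare: since $n$ is even, $\gcd(n,m)>1$ whenever $\ord_\ell(q)$ is even, which happens for a positive-density set of $\ell$ by Chebotarev applied to $\Q(\zeta_4,\sqrt q)$ (and likewise for any prime factor of $n$). The coprimality of $[F(\zeta_\ell):\Q]$ with $p$ is a red herring here; it plays no role in the Mackey computation. You flag this step as ``the main obstacle,'' and indeed it is: the lemma you would need is not a local statement and cannot be cited from Part~I in the form you describe.

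The paper proves the lemma globally, by classification of possible images rather than by pure local analysis at $q$. It applies Proposition~5.1.2 and Lemma~1.1.4 of~\cite{BL} (valid once $\ell$ is large compared to $[M:\Q]$ and to the spread of Hodge--Tate weights): the residual image either is reducible, or is induced, or $\kappa_\lambda^\times\,\mathrm{im}(\overline{\rho}_\lambda(\pi))$ is a \emph{sturdy} subgroup of $\GL_n(\kappa_\lambda)$. Reducibility is excluded by the local parameter at $q$ (as you also use); the induced case is excluded, for $\ell$ large, by the analysis of Section~3 of~\cite{partII}; so the image is sturdy for $\ell>B$. Then Lemma~4.2.2 of~\cite{BL} shows sturdiness --- hence absolute irreducibility --- persists upon restriction to $G_{F(\zeta_\ell)}$, with a bound uniform in the quadratic field $F$ because sturdiness is inherited by the $\kappa_\lambda^\times$-saturation of any finite-index subgroup of bounded index. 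Your strategy salvages the ``not reducible'' leg but has no substitute for ruling out the ``induced'' leg globally, and that omission is exactly where the Mackey argument breaks down for the positive-density set of $\ell$ with $\gcd(n,m)>1$.
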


\begin{proof} Take $\ell$ to be a sufficiently large prime, such that in particular $\pi$ is not ramified at $\ell$ and $\ell$ is larger than the difference between any pair of Hodge-Tate weights plus $2$. Further we will assume $\ell\not=p, q$, so that we know that, because of the local parameter at $q$, $\overline{\rho}_{\lambda}(\pi)$ is absolutely irreducible and $\rho_{\lambda}(\pi)$ can be defined over $M_{\lambda}$. Call $\kappa_{\lambda}$ the residue field of $M_{\lambda}$.

Assume that $\ell>[M:\mathbb{Q}]$. Because of regularity (and constancy of Hodge-Tate weights), Proposition~5.1.2 and Lemma~1.1.4 of \cite{BL} (the latter is used to get rid of case (3) of the former) can be applied to conclude that either the residual mod $\lambda$ representation is reducible, or it is induced, or the group $\kappa_{\lambda}^{\times}\mathrm{im}(\overline{\rho}_{\lambda}(\pi))$ is a sturdy subgroup of $\GL_n(\kappa_{\lambda})$ (see \cite{BL} for the definition of sturdy subgroup). We already know that $\overline{\rho}_{\lambda}(\pi)$ is irreducible (because of the local parameter at $q$). Also, if we assume that it is induced, then we obtain a contradiction (for $\ell$ sufficiently large) by
%Lemma 3.7 of
Section~3 of \cite{partII}.
Thus, we conclude that there exists some bound $B$ such that,  for all $\ell>B$, $\kappa_{\lambda}^{\times}\mathrm{im}(\overline{\rho}_{\lambda}(\pi))$ is a sturdy subgroup of $\GL_n(\kappa_{\lambda})$. As a consequence of Lemma~4.2.2 of~\cite{BL}, $\kappa_{\lambda}^{\times}\mathrm{im}(\overline{\rho}_{\lambda}(\pi)\vert_{G_{F(\zeta_{\ell})}})$  is also a sturdy subgroup of $\GL_n(\kappa_{\lambda})$.
Thus, in particular, $\overline{\rho}_{\lambda}(\pi)\vert_{G_{F(\zeta_{\ell})}}$ is irreducible.
\end{proof}

From the lemma above, we know that there is a bound $B$ such that
for all primes $\ell > B$ and every $\lambda \mid \ell$ the
conclusion of the lemma holds for any quadratic field~$F$. In what
follows, we exclude the primes $\ell \leq B$.

The following lemma allows us to control the case where a residual
representation in our system can lose ramification at $t$, at least
over imaginary quadratic fields.

\begin{lem}\label{teo:level-lowering} Let $F$ be an
imaginary quadratic field such that $t$ and $q$, the primes where
$\pi$ ramifies, are split. Then among the primes $\ell$ that are
split in $F$, there are only finitely many $\ell$ such that the residual representation $\overline{\rho}_\lambda(\pi)$ is
unramified at $t$ for some $\lambda$ above $\ell$. In particular, the residual mod $\lambda$
representation contains a transvection given by the image of $I_t$
for all but finitely many primes $\ell$ that are split in
$F$, for every $\lambda \mid \ell$.
\end{lem}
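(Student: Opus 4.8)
The plan is to run a Ribet-style level-lowering argument after base change to $F$, and then to bound the exceptional primes by a finiteness statement for automorphic representations of bounded level and archimedean type. So first I would restrict to primes $\ell$ that are split in $F$ (so that the places of $F$ above $\ell$ have residue field $\F_\ell$, which keeps the Fontaine--Laffaille condition clean), exceed the bound $B$ of Lemma~\ref{teo:strongirreducible}, are unramified for $\pi$, are different from $p$ and $q$, and are larger than the largest difference of Hodge--Tate weights plus $2$; only finitely many primes are discarded. Suppose that for such an $\ell$ and some $\lambda\mid \ell$ the residual representation $\overline{\rho}_\lambda(\pi)$ is unramified at $t$. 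Since $F$ is imaginary quadratic and $t,q$ split in $F$, the base change $\pi_F$ is a regular algebraic conjugate self-dual cuspidal (RACSDC) automorphic representation of $\GL_n(\A_F)$ -- it is cuspidal because $r:=\rho_\lambda(\pi)|_{G_F}$ is irreducible by Lemma~\ref{teo:strongirreducible} -- and $r$ is conjugate self-dual, unramified outside the places above $q,t,\ell$, de Rham at the places above $\ell$ with $\lambda$-independent regular Hodge--Tate weights, crystalline there because $\ell\notin\{q,t\}$, hence potentially diagonalizable there because $\ell$ lies in the Fontaine--Laffaille range; moreover $\bar r=\overline{\rho}_\lambda(\pi)|_{G_F}$ is automorphic (it is the residual representation attached to $\pi_F$), is unramified at the places above $t$ by assumption, and is irreducible after restriction to $G_{F(\zeta_\ell)}$ by Lemma~\ref{teo:strongirreducible}, which for $\ell$ large also supplies the adequacy hypothesis.

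With these ingredients in place I would invoke the level-lowering theorem \cite[Thm~4.4.1]{BLGGT}: it produces a RACSDC automorphic representation $\pi'$ of $\GL_n(\A_F)$ with $\overline{\rho}_\lambda(\pi')\cong \bar r$, with the same Hodge--Tate weights as $r$ above $\ell$, crystalline -- hence unramified -- above $\ell$, unramified above $t$, and with conductor above $q$ bounded by that of $\pi_F$. Consequently $\pi'$ has prime-to-$\ell$ conductor dividing a fixed ideal of $F$ and a fixed infinitesimal character at every archimedean place, so by the finiteness of the set of cuspidal automorphic representations of $\GL_n(\A_F)$ of bounded level and bounded infinitesimal character, $\pi'$ belongs to a finite set $\Sigma$ that is independent of $\ell$.

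Next I would show that each $\pi'\in\Sigma$ is responsible for only finitely many exceptional $\lambda$. The compatible system $(\rho_\mu(\pi'))_\mu$ differs from $(\rho_\mu(\pi)|_{G_F})_\mu$, because the latter carries a transvection in the image of inertia at the places above $t$ (the Steinberg-type ramification at $t$ recorded in Section~\ref{sec:compatible-systems}), whereas $\pi'$ is unramified there; if the two systems coincided, a Chebotarev plus Brauer--Nesbitt argument would force $\rho_\mu(\pi')\cong \rho_\mu(\pi)|_{G_F}$ for every $\mu$ and contradict this at $t$. Hence their Frobenius characteristic polynomials already differ at some fixed place $v_0$ of $F$, the difference of coefficients being a fixed nonzero element of a number field, which is divisible by only finitely many $\mu$; therefore $\overline{\rho}_\mu(\pi')\cong \overline{\rho}_\mu(\pi)|_{G_F}$ for only finitely many $\mu$. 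Taking the union over the finite set $\Sigma$ bounds the set of primes $\ell$ split in $F$ for which $\overline{\rho}_\lambda(\pi)$ is unramified at $t$ for some $\lambda\mid\ell$, which is the first assertion. For the remaining split primes $\ell$ the residual representation is ramified at $t$, and since $\rho_\lambda(\pi)(I_t)$ is topologically generated by a unipotent transvection $1+N$, its reduction modulo $\lambda$ is generated by $1+\bar N$, which is nontrivial (by ramification at $t$) with $\bar N$ of rank at most one -- hence a transvection; this yields the last sentence of the lemma.

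The step I expect to be the main obstacle is checking, uniformly in $\ell$, that $r$ and $\bar r$ satisfy \emph{all} the hypotheses of \cite[Thm~4.4.1]{BLGGT} -- potential diagonalizability at the places above $\ell$, adequacy of the residual image, and automorphy of $\bar r$ over $F$ -- and pinning down the level of the resulting $\pi'$ precisely enough that the finiteness input applies; once $\Sigma$ is in hand, the Chebotarev-type separation of the two compatible systems is routine.
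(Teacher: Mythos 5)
Your proposal is correct and follows essentially the same route as the paper: base change $\pi$ to the imaginary quadratic field $F$, invoke \cite[Thm~4.4.1]{BLGGT} (with potential diagonalisability supplied by Fontaine--Laffaille and residual irreducibility over $G_{F(\zeta_\ell)}$ supplied by Lemma~\ref{teo:strongirreducible}) to produce a level-lowered $\pi'$, appeal to Harish--Chandra finiteness to bound $\pi'$ in a set independent of $\ell$, and then separate the compatible systems of $\pi_F$ and $\pi'$ via the ramification at $t$ together with Chebotarev/Brauer--Nesbitt. The only cosmetic difference is that the paper phrases the last step as a contradiction under the hypothesis of infinitely many exceptional $\ell$ (finitely many $\pi'$ plus infinitely many congruences force the systems to coincide), while you phrase it as a direct count (each $\pi'\in\Sigma$ accounts for only finitely many $\lambda$); the two are equivalent, and the paper also explicitly passes through the ``connects'' relation of \cite{BLGGT} to pin down the inertial type of $\pi'$ at the place above $q$, which is the detail you correctly flag as needing care for the finiteness input.
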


\begin{proof} We restrict our reasoning to primes $\ell$ that are split in
$F$, different from $q,t$, and greater than $B$. The last condition allows us to
apply Lemma~\ref{teo:strongirreducible} for the residual mod~$\lambda$ representation.

Observe that, since $t$ is split in $F$, for every $\ell \neq t$ and every
$\lambda \mid \ell$, the restriction $\rho_\lambda(\pi)|_{G_F}$
ramifies at the primes in $F$ above $t$, and of course, if we assume
that the mod $\lambda$ representation becomes unramified at $t$ then
the same happens to its restriction to $G_F$. Thus, we can work over
$G_F$ and this is the setting where the problem of losing
ramification can be attacked because the available level-lowering
results only work, to this day, over CM fields. By (quadratic) base
change due to Arthur and Clozel, the restriction to $G_F$ of our compatible system is also
attached to an automorphic representation, namely the base change $\pi_F$
of $\pi$, an automorphic representation of $\GL_n(\A_F)$. (Note that $\pi_F$ is cuspidal since $\rho_\lambda(\pi)|_{G_F}$ is absolutely irreducible and that $\pi_F$ paired with the trivial character is a polarized representation in the sense of \cite{BLGGT}. Obviously $\pi_F$ is also regular algebraic, so we can speak of a compatible system attached to $\pi_F$.)

Thus, we take a prime $\ell$ split in $F$, $\ell > B$, and we assume
that $\overline{\rho}_\lambda(\pi)$ (and, a fortiori, its restriction to
$G_F$) is unramified at $t$ (the primes in $F$ dividing $t$,
respectively). To ease notation, let us call $\overline{\rho}_{F,\lambda}:=\overline{\rho}_{\lambda}(\pi)\vert_{G_F}$. This residual representation has an automorphic lift,
the one given by the $\lambda$-adic representation attached to
$\pi_F$, and since this $\lambda$-adic lift is ramified at the primes
above $t$ while $\overline{\rho}_{F,\lambda}$ is not, we can apply a level-lowering
result in this situation. Namely, we want to apply Theorem~4.4.1
from \cite{BLGGT} taking $S$ to be the ramification set
of $\bar{\rho}_{F,\lambda}$, i.e., we will take $S = \{\ell, q\}$.
In particular, $t$ will not be in~$S$. Let us first discuss the idea informally before going into details. The theorem gives, for a residual representation that is known to be automorphic, the existence of another
automorphic lift with prescribed local types, under some conditions that are met in our situation. We insist on
the new automorphic lift to have the same local type at ramified
primes as the given one except at the places above $t$ where we are
assuming the residual representation to be unramified, since we want
this new automorphic form to be unramified at the places above $t$. This is the reason why, in
our situation, this theorem can be considered as a level-lowering
result.

Going back to Theorem~4.4.1 of \cite{BLGGT}, we need to check the two conditions there. The $\lambda$-adic lift that we have, the one attached to $\pi_F$, is potentially diagonalisable at the primes
dividing $\ell$ whenever $\ell$ is sufficiently large, because then we
are in a Fontaine-Laffaille situation (cf.\ Lemma~1.4.3~(2) of~\cite{BLGGT} for a precise statement of the criterion for potential diagonalisability). This verifies the first condition. The second condition is immediately satisfied by Lemma~\ref{teo:strongirreducible} and $\ell>B$. We choose one place dividing
$q$ and one place dividing $\ell$ in $F$ and
if we call $S'$ the set of these two primes, for any $v \in S'$ we
fix as $n$-dimensional $\lambda$-adic representation $\rho_v$ of
$G_{F_v}$ the one obtained from the $\lambda$-adic representation
attached to $\pi_F$, restricted to $G_{F_v}$. Having this fixed, we
can apply Theorem~4.4.1 in \cite{BLGGT} to conclude that there exists
another automorphic representation $\pi'_F$ of $\GL_n(\A_F)$  such that:

\begin{enumerate}[(1)]

\item\label{item:pi'F1} $\pi'_F$ is cuspidal, regular algebraic, and polarizable in the sense of \cite{BLGGT} (more precisely RAECSDC when paired with a suitable Hecke character; this is analogous to the RAESDC representation in Section~\ref{sec:RAESDC} except that conjugate self--duality replaces
self--duality there), in particular equipped with an associated compatible system
$(\rho_\mu(\pi'_F))_\mu$, indexed by the primes $\mu$ of some number field~$M'$,

\item\label{item:pi'F2} $\pi'_F$ is unramified outside $S$,

\item\label{item:congruence} there is a place $\lambda'$ of~$M'$ above~$\ell$ such that
$ \overline{\rho}_{\lambda'}(\pi'_F) \cong \bar{\rho}_{F,\lambda}$,

\item\label{item:connects} if $v \in S'$, then $\rho_{\lambda'}(\pi'_F)|_{G_{F_v}}$ connects to
$\rho_v$ (see \cite{BLGGT} for the definition of {\it connects}).
\end{enumerate}

On the one hand, by known properties of the connected
relation (cf.\ \cite{BLGGT}, Section 1.4), Condition (\ref{item:connects}) at $v \mid \ell$  implies that the Weil-Deligne representations of
$\rho_{\lambda'}(\pi'_F)|_{G_{F_v}}$ and $\rho_v$, restricted to the inertia group at $v$, are isomorphic. Since $\rho_v$ is known to be crystalline, we can conclude that $\rho_{\lambda'}(\pi'_F)|_{G_{F_v}}$ is crystalline. Moreover the connected relation implies (by definition) that they have both the same Hodge-Tate numbers. From
this (and conjugate essential self-duality) we deduce that $\pi'_F$
has level prime to $\ell$ and that it has the same infinitesimal character as $\pi_F$.

On the other hand, at the prime in $S'$ above $q$, say $w$, the
Weil-Deligne representations corresponding to $\rho_{\lambda'}(\pi'_F)|_{G_{F_w}}$ and $\rho_w$ have isomorphic restrictions to
inertia. Indeed, since $\pi_F$ and $\pi'_F$ are cuspidal automorphic representations of $\GL(\A_F)$, the local components $\pi'_w$ and $\pi'_{F,w}$ are generic. Hence, by Lemma~1.3.2 of~\cite{BLGGT}, $\rho_{\lambda'}(\pi')|_{G_{F_w}}=\rho_w$ and $\rho_{\lambda'}(\pi'_F)|_{G_{F_w}}$ are smooth. From Lemma  1.3.4~(2) of~\cite{BLGGT}
(due to Choi) we conclude that the inertial types of $\pi_F$ and $\pi'_F$ at $w$ agree.

Observe that in particular, independently of $\ell$, the
automorphic representation $\pi'_F$ has fixed infinitesimal character at $\infty$,
fixed ramification set and fixed types at the ramified primes. It follows from the finiteness result of Harish--Chandra (cf.\ (1.7) and (4.4) of \cite{BorelJacquet}) that there are
only finitely many possibilities for $\pi'_F$. (We see from \cite{BorelJacquet} that there are finitely many $\pi'_F$ with fixed infinitesimal character at $\infty$ such that the finite part of $\pi'_F$ has a nonzero invariant vector under a fixed open compact subgroup of the finite part of $\GL_n(\A_{F})$. So it boils down to showing that the conductor of $\pi'_F$ is bounded. The latter can be seen via local Langlands from that we fix the ramification set of $\pi'_F$ as well as the types at the primes therein.)

Now assume that the
residual mod $\lambda$ representation attached to $\pi_F$ is unramified at $t$ for
\emph{infinitely} many primes $\ell$ (we keep the assumption that we are
only working with primes $\ell$ that are split in $F$). For each $\ell$ we find $\pi'_F$ as above (which a priori depends on $\ell$). Since there are only finitely many possibilities for $\pi'_F$ as $\ell$ varies, we conclude that there exists a $\pi'_F$ as in \eqref{item:pi'F1} and \eqref{item:pi'F2} such that the congruence (\ref{item:congruence}) and condition \eqref{item:connects} hold true for infinitely many $\lambda$.
But this has an immediate consequence
for the compatible systems attached to $\pi_F$ and $\pi'_F$. Since
there are congruences between these two systems in infinitely many
different residual characteristics, this forces the traces of both
systems at unramified places to be equal, and then using
Chebotarev's density theorem combined with the Brauer-Nesbitt theorem
we conclude that the two systems are isomorphic. But looking at the
restriction of these two systems at a decomposition group at $t$ we
get a contradiction, because due to compatibility with Local
Langlands, $(\rho_{\lambda}(\pi_F))_\lambda$ is known to be ramified at the places above $t$ while $(\rho_{\mu}(\pi'_F))_\mu$ is unramified at the places above~$t$ by construction.
This contradiction proves that the residual representation $\overline{\rho}_\lambda(\pi)$ can be
unramified at $t$ only for finitely many primes $\ell$ (among the primes that split in~$F$).
\end{proof}

We can apply the previous lemma over any quadratic number field of
the form $F= \Q(\sqrt{-w})$ where $w$ is a prime such that $q$ and
$t$ are split in~$F$. It is clear that there are infinitely many
such fields; consider an infinite sequence $(F_n)_n$ of distinct such fields. Let us call $T_n$ the set of primes that are split in one of the $F_i$, $i=1, \dots n$, and let $\calL$ be the set of primes $\ell$
such that $\overline{\rho}_\lambda(\pi)$ is unramified at $t$ for some
$\lambda \mid \ell$. Then $\calL\cap T_n$ is finite for any $n\in \mathbb{N}$. But as $n$ grows, the sets $T_n$ have (natural) density arbitrarily close to~$1$.
This clearly implies that $\calL$ must have density~$0$.

\section{Conclusion}\label{sec:conclusion}

The aim of this section is to show that the system of Galois representations studied in the previous two
sections, attached to the automorphic form constructed in Section~\ref{sec:existence},
does satisfy all the conditions in
%Corollary 1.6 of
the main result on the inverse Galois problem of~\cite{partII}.
Let us
check this in detail. (In the setting of \cite{partII},
%Set-up~1.4 of~\cite{partII}.
we take $N_1=t$, $N_2=1$, $N=t$, and $L_0 = \Q$). First, observe that
the primes $q$ and $p$ as in Section~\ref{sec:compatible-systems} satisfy that $q$ is completely split in $\Q$,
and $p\equiv 1 \pmod n$, $p\mid q^n-1$ but $p\nmid q^{\frac{n}{2}}-1$.

The system $(\rho_{\lambda}(\pi))_\lambda$ of Galois representations of $G_{\Q}$ is a.~e.\ absolutely irreducible and
symplectic, and it satisfies the following properties:

\begin{itemize}
 \item The ramification set of the system is
$R=\{q, t\}$;
\item The system is Hodge-Tate regular with constant
Hodge-Tate weights and for every $\ell\not\in R$ and $\lambda\mid \ell$, the representation
$\rho_{\lambda}(\pi)$ is crystalline.
Let $a\in \mathbb{Z}$ be the smallest Hodge-Tate weight and let us call $k$ the biggest difference between any two Hodge-Tate numbers. By
Fontaine-Laffaille theory, we conclude that for every
$\ell\not\in R$, $\ell>k+2$, $\lambda\mid \ell$,
$\chi_{\ell}^{a}\otimes \overline{\rho}_{\lambda}(\pi)$ is regular in the sense of
%Definition 3.2 of
\cite{partII}, and the tame inertia weights of this representation are bounded by~$k$ (in fact,
these weights for these $\ell$ agree with the Hodge-Tate numbers of
the system plus~$a$);

\item As we have seen in Section~\ref{sec:transvections}, for a density one set of primes $\ell\neq t$ and for every $\lambda\mid \ell$, the transvection corresponding to the image of $I_t$ under $\rho_{\lambda}(\pi)$ (cf.\ (\ref{item:att}) of Section~\ref{sec:compatible-systems})
is preserved in the reduction mod $\lambda$, hence $\overline{\rho}_{\lambda}(\pi)$ contains a nontrivial transvection;

\item As we have already observed (cf.\ (\ref{item:atq}) of Section~\ref{sec:compatible-systems}), for every $\ell\not=q$, for every
$\lambda\mid \ell$, the representation
$\rho_{\lambda}(\pi)|_{G_{\Q_q}}\cong \rho_q\otimes \alpha_{\lambda}$ for some unramified character $\alpha_{\lambda}:G_{\Q_q}\rightarrow \overline{M}_{\lambda}^{\times}$ and $\rho_q$ is, by
definition, $\mathrm{Ind}_{G_{\Q_{q^n}}}^{G_{\Q_q}}(\chi_q)$ for
$\chi_q$ as defined in Section~\ref{sec:existence};

\item For every $\ell\not=t$ and for every $\lambda\vert \ell$,
the image of $I_t$ under $\rho_{\lambda}(\pi)$ consists of either a group
generated by a transvection or the trivial group, so $\overline{\rho}_{\lambda}(\pi)(I_t)$ is in any case
an $\ell$-group, therefore it has order prime to $n!$ for any $\ell$
larger than $n$.
\end{itemize}
Thus, the main result on the inverse Galois problem of
%Corollary 1.6 of
\cite{partII} can be applied, and we deduce the following theorem:

\begin{thm}\label{teo:bigimage}
Let $\pi$ be the automorphic representation given by Theorem~\ref{t:existence-self-dual} of
Section~\ref{sec:existence}, with the ramified primes $q$ and $t$ satisfying the
conditions specified in Section~\ref{sec:compatible-systems}. Then, the compatible system
$(\overline{\rho}_{\lambda}(\pi))_\lambda$ has huge residual image for a density one
set of primes $\ell$, for every $\lambda \vert \ell$, i.e.,
$\mathrm{im}(\overline{\rho}_{\lambda}(\pi))$ is a huge subgroup of
$\GSp_n(\overline{\F}_\ell)$ for a density one set of primes.
\end{thm}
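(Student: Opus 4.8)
The plan is to invoke the main result on the inverse Galois problem of~\cite{partII}, whose input is precisely a compatible system of symplectic Galois representations satisfying a short list of local-global axioms, and to verify those axioms one by one for our system $(\rho_{\lambda}(\pi))_\lambda$ using the properties established in Sections~\ref{sec:compatible-systems} and~\ref{sec:transvections}. First I would record the general shape of the system: by (1)--(7) of Section~\ref{sec:compatible-systems} it is symplectic with multiplier $\cyclo^{1-n}$, generically (i.e.\ for $\lambda\nmid pq$) absolutely irreducible with image in $\GSp_n(\cO_{\overline{M}_\lambda})$, unramified outside $R=\{q,t\}\cup\{\ell\}$, de Rham (crystalline outside $R$) with constant distinct Hodge-Tate weights. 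Then I would match the notation of~\cite{partII}, setting $N_1=t$, $N_2=1$, $N=t$, $L_0=\Q$, and checking the arithmetic conditions on the auxiliary primes: $q$ is completely split in $L_0=\Q$, $p\equiv 1\pmod n$, $p\mid q^n-1$, and $p\nmid q^{n/2}-1$ — the last three hold by the choices made at the start of Section~\ref{sec:compatible-systems}.

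Next I would address the two genuinely local inputs. At~$q$: by property~(\ref{item:atq}) of Section~\ref{sec:compatible-systems}, $\rho_\lambda(\pi)|_{G_{\Q_q}}\cong \rho_q\otimes\alpha_\lambda$ with $\alpha_\lambda$ unramified and $\rho_q=\Ind^{G_{\Q_q}}_{G_{\Q_{q^n}}}(\chi_q)$ for the maximally induced character $\chi_q$ of order~$p$ from Section~\ref{sec:existence}; this is exactly the ``maximally induced place of order~$p$'' hypothesis. At~$t$: by property~(\ref{item:att}), the image of $I_t$ under $\rho_\lambda(\pi)$ is generated by a transvection (coming from the $N$-part of $\rec_t(\mathrm{St}_2)$), and — this is where Section~\ref{sec:transvections} enters — Lemmas~\ref{teo:strongirreducible} and~\ref{teo:level-lowering}, combined over the family of imaginary quadratic fields $F=\Q(\sqrt{-w})$, show that the set $\calL$ of primes $\ell$ for which $\overline{\rho}_\lambda(\pi)$ becomes unramified at~$t$ for some $\lambda\mid\ell$ has density~$0$. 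Hence for a density-one set of~$\ell$ and every $\lambda\mid\ell$, $\overline{\rho}_\lambda(\pi)$ still contains a nontrivial transvection, and for $\ell>n$ the group $\overline{\rho}_\lambda(\pi)(I_t)$, being an $\ell$-group, has order prime to $n!$.

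I would then package the Hodge-Tate/Fontaine-Laffaille bookkeeping: let $a$ be the smallest Hodge-Tate weight and $k$ the largest gap between two Hodge-Tate numbers; for $\ell\notin R$ with $\ell>k+2$ and $\lambda\mid\ell$, Fontaine-Laffaille theory gives that $\cyclo^{a}\otimes\overline{\rho}_\lambda(\pi)$ is regular in the sense of~\cite{partII} with tame inertia weights bounded by~$k$ (indeed these weights equal the Hodge-Tate numbers shifted by~$a$). Having now verified every hypothesis of the main theorem of~\cite{partII} — symplectic compatible system, a.e.\ absolutely irreducible, maximally induced place~$q$ of order~$p$ with the stated congruences, a transvection at~$t$ preserved mod~$\lambda$ for density-one~$\ell$, regularity and bounded tame inertia weights, and $\overline{\rho}_\lambda(\pi)(I_t)$ of order prime to $n!$ — I would conclude that $\mathrm{im}(\overline{\rho}_\lambda(\pi))$ is a huge subgroup of $\GSp_n(\overline{\F}_\ell)$ for a density-one set of~$\ell$ and every $\lambda\mid\ell$, which is the assertion of Theorem~\ref{teo:bigimage}.

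The main obstacle is not any single step but rather the careful verification that the normalizations and hypotheses of~\cite{partII} align exactly with what Sections~\ref{sec:RAESDC}--\ref{sec:transvections} provide — in particular that ``regular'' and ``tame inertia weights bounded by~$k$'' in the sense of~\cite{partII} are genuinely implied by Fontaine-Laffaille theory in our Hodge-Tate range, and that the density-one control over transvections obtained over a \emph{varying} family of imaginary quadratic fields (rather than a single one) suffices; the latter is handled by the density argument following Lemma~\ref{teo:level-lowering}, and the former by tracking the twist $\cyclo^a$ and the bound $\ell>k+2$. Everything else is a routine transcription of already-proved facts.
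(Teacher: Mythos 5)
Your proof is correct and follows essentially the same path as the paper's own argument in Section~\ref{sec:conclusion}: you set $N_1=t$, $N_2=1$, $N=t$, $L_0=\Q$, verify the congruence conditions on $p$ and $q$, invoke properties (\ref{item:atq}) and (\ref{item:att}) of Section~\ref{sec:compatible-systems} for the maximally induced place and the transvection, use the density argument following Lemma~\ref{teo:level-lowering} for the mod-$\lambda$ survival of the transvection, and use Fontaine--Laffaille to control regularity and tame inertia weights before applying the main result of~\cite{partII}. No genuine difference in approach or content.
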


To derive our Galois theoretic result
(i.e.\ Theorem~\ref{teo:InverseGalois}), we just observe that, for any
given integer $d>0$, we can change the condition $p\equiv 1
\pmod{n}$ at the beginning of Section~\ref{sec:compatible-systems} by the stronger condition
$p\equiv 1\pmod{dn}$ while choosing $q$ and $t$ exactly as we did
in Section~\ref{sec:compatible-systems}.

With this, we conclude from the main result on the inverse Galois problem of
%Cor 1.6 of
\cite{partII} that, for such an integer~$d$, the groups $\PGSp_n(\F_{\ell^d})$ or
$\PSp_n(\F_{\ell^d})$ are realised as Galois groups over~$\Q$ for a
positive density set of primes~$\ell$. Since this can be done for
any~$d$, Theorem~\ref{teo:InverseGalois} follows.

\bibliography{Bibliog}
\bibliographystyle{amsalpha}

\end{document}